\newcommand{\size}[1]{\left \vert #1 \right \vert}
\newcommand{\ceil}[1]{\left \lceil #1 \right \rceil}
\newcommand{\floor}[1]{\left \lfloor #1 \right \rfloor}
\newcommand{\prob}[1]{\mathrm{Pr}\left [ #1 \right ]}
\newcommand{\expect}[1]{\mathbb{E} \left [ #1 \right ]}
\newcommand{\eps}{\varepsilon}
\newcommand{\N}{{\mathbb N}}
\newcommand{\G}{{\mathcal G}}
\newcommand{\bg}{{b_g}}
\newtheorem{theorem}{Theorem}
\newtheorem{lemma}[theorem]{Lemma}
\newtheorem{proposition}[theorem]{Proposition}
\newtheorem{corollary}[theorem]{Corollary}
\newtheorem{observation}[theorem]{Observation}
\theoremstyle{definition}
\newtheorem{definition}[theorem]{Definition}
\begin{document}

\title{Game brush number}

\author{William B. Kinnersley}
\address{Department of Mathematics, University of Rhode Island, Kingston, RI, USA, 02881}
\email{\tt billk@mail.uri.edu}

\author{Pawe{\l} Pra{\l}at}
\address{Department of Mathematics, Ryerson University, Toronto, ON, Canada, M5B 2K3}
\email{\tt pralat@ryerson.ca}

\begin{abstract}
We study a two-person game based on the well-studied brushing process on graphs.  Players Min and Max alternately place brushes on the vertices of a graph.  When a vertex accumulates at least as many brushes as its degree, it sends one brush to each neighbor and is removed from the graph; this may in turn induce the removal of other vertices.  The game ends once all vertices have been removed.  Min seeks to minimize the number of brushes played during the game, while Max seeks to maximize it.  When both players play optimally, the length of the game is the {\em game brush number} of the graph $G$, denoted $\bg(G)$.

By considering strategies for both players and modelling the evolution of the game with differential equations, we provide an asymptotic value for the game brush number of the complete graph; namely, we show that $\bg(K_n) = (1+o(1)) n^2/e.$ Using a fractional version of the game, we couple the game brush numbers of complete graphs and the binomial random graph $\G(n,p)$. It is shown that for $pn \gg \ln n$ asymptotically almost surely $\bg(\G(n,p))=(1+o(1))p\bg(K_n)=(1+o(1))pn^2/e$. Finally, we study the relationship between the game brush number and the (original) brush number.
\end{abstract}

\maketitle

\section{Introduction}

Imagine a network of pipes that must be periodically cleaned of a regenerating contaminant, say algae. In {\em cleaning} such a network, there is an initial configuration of brushes on vertices, and every vertex and edge is initially regarded as \textit{dirty}.  A vertex is ready to be cleaned if it has at least as many brushes as incident dirty edges.  When a vertex is cleaned, it sends one brush along each incident dirty edge; these edges are now said to be \textit{clean}.  (No brush ever traverses a clean edge.)  The vertex is also deemed clean. Excess brushes remain on the clean vertex and take no further part in the process. (In fact, for our purposes in this paper, we may think about clean vertices as if they were removed from the graph.) The goal is to clean all vertices (and hence also all edges) of the graph using as few brushes as possible.  The minimum number of brushes needed to clean a graph $G$ is the {\em brush number} of $G$, denoted $b(G)$.

Figure~\ref{fig:ex1} illustrates the cleaning process for a graph $G$ where there are initially $2$ brushes at vertex $a$. The solid edges indicate dirty edges while the dotted edges indicate clean edges. For example, the process starts with vertex $a$ being cleaned, sending a brush to each of vertices $b$ and $c$.  

\begin{figure}[htbp]
\[\includegraphics[width=0.9\textwidth]{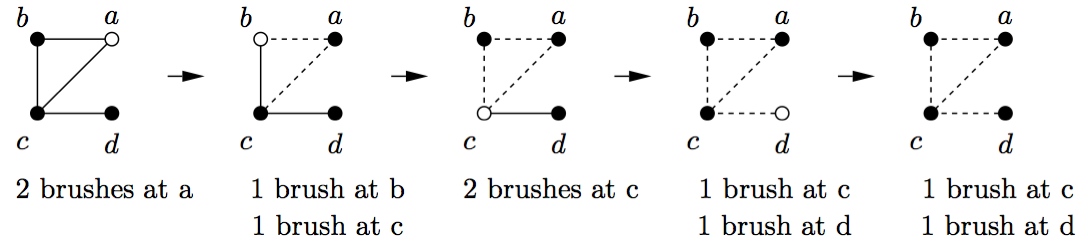}\] \label{fig:ex1} 
\vspace{-0.2in}\caption{An example of the cleaning process for graph $G$.}
\end{figure}

This model, which was introduced in~\cite{MNP}, is tightly connected to the concept of {\em minimum total imbalance} of a graph, which is used in graph drawing theory.  The cleaning process has been well studied, especially on random graphs~\cite{APW, P}. (See also~\cite{algorithm} for algorithmic aspects, \cite{MNP2, P2} for a related model of cleaning with brooms, \cite{BFGPP} for a variant with no edge capacity restrictions, \cite{BDP} for a variant in which vertices can send out no more than $k$ brushes, and~\cite{GNP} for a combinatorial game.) Owing to inspiration from chip-firing processes~\cite{chip, merino}, brushes disperse from an individual vertex in unison, provided that their vertex meets the criteria to be cleaned.  Models in which multiple vertices may be cleaned simultaneously are called {\em parallel cleaning models}; see~\cite{GMNP} for more details.  In contrast, {\em sequential cleaning models} mandate that vertices get cleaned one at a time. The variant considered in~\cite{MNP} and the one we consider in this paper are sequential in nature.

The {\em brushing game} that we introduce in this paper is a two-player game played on a graph $G$.  Initially, every vertex and edge is dirty and there are no brushes on any vertices.  The players, Max and Min, alternate turns; on each turn, a player adds one brush to a vertex of his or her choosing.  When a vertex accumulates at least as many brushes as it has dirty neighbors, it {\em fires}, sending one brush to each dirty neighbor. All edges incident to this vertex become clean, and the vertex itself becomes clean. This may in turn make other vertices ready to fire, so the process continues until we obtain a stable configuration. (It is known that the sequence in which vertices fire does not affect the distribution of brushes on dirty vertices---see below for more details.) The game ends when all vertices (and so all edges as well) are clean.  Max aims to maximize the number of brushes played before this point, while Min aims to minimize it.  When Min starts and both players play optimally, the length of the game on $G$ is the {\em game brush number} of $G$, denoted $\bg(G)$; we use $\widehat \bg(G)$ to denote the variant of the game in which Max plays first. 

The game brush number follows in the same spirit as the game matching number~\cite{CKOW}, game chromatic number~\cite{DZ}, game domination number~\cite{BKR}, toppling number~\cite{BKP}, etc., in which players with conflicting objectives together make choices that produce a feasible solution to some optimization problem.  The general area can be called \emph{competitive optimization}.  Competitive optimization processes can also be viewed as on-line problems in which one wants to build a solution to some optimization problem, despite not having complete control over its construction.  In this context, the game models adversarial analysis of an algorithm for solving the problem: one player represents the algorithm itself, the other player represents the hypothetical adversary, and the outcome of the game represents the algorithm's worst-case performance.

Throughout this paper, we consider only finite, simple, undirected graphs. For background on graph theory, the reader is directed to~\cite{west}.

\subsection{Main results}

Let us start with the following convenient bound proved in Section~\ref{sec:pre}.

\begin{theorem}\label{thm:at_most_one}
Always $\size{\bg(G) - \widehat \bg(G)} \le 1$.
\end{theorem}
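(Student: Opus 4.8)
The plan is to prove the statement by induction, after extending both parameters to the intermediate positions of the game. Call $(H,f)$ a \emph{position} if $H$ is an induced subgraph of $G$ (the still-dirty part) and $f$ assigns to each vertex $v$ of $H$ a nonnegative integer with $f(v)\le\deg_H(v)-1$ (a stable configuration; this forces $\delta(H)\ge 1$), and extend $\bg$ and $\widehat\bg$ to positions as the number of brushes played from that point onward under optimal play, so that the original game is the position $(G,\mathbf{0})$ (after instantly cleaning any isolated vertices of $G$). Write $P^{+v}$ for the position obtained from $P$ by adding one brush at $v$ and then firing to stabilization; this is well defined because the firing order is irrelevant, and for every non-terminal position $P$,
\[
  \widehat\bg(P)=1+\max_{v\in V(P)}\bg(P^{+v}),\qquad \bg(P)=1+\min_{v\in V(P)}\widehat\bg(P^{+v}).
\]
I would run the induction on $\Phi(P):=\sum_{v\in V(H)}(\deg_H(v)-f(v))=2\size{E(H)}-\sum_v f(v)$, a nonnegative integer which a routine computation shows satisfies $\Phi(P^{+v})\le\Phi(P)-1$ for all $P,v$ (the placement of the brush and the ensuing cascade together strictly decrease it); the base case is the empty position, where both parameters equal $0$.

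The one ingredient beyond the recursions and the inductive hypothesis is a \emph{continuation lemma}: a free extra brush never increases either parameter, i.e.\ $\bg(P^{+v})\le\bg(P)$ and $\widehat\bg(P^{+v})\le\widehat\bg(P)$ for every position $P$ and every $v\in V(P)$ (only the first inequality is used below). I would prove this by a simulation argument: the player to move in $P^{+v}$ plays out in her head the optimal game starting from $P$, copying the opponent's real moves into the simulated game and playing the simulated optimal responses back into the real game, while maintaining the invariant that the real position always dominates the simulated one — no more dirty vertices, and at least as many brushes on each surviving vertex — so that when the simulated game terminates the real one has too. I expect this lemma to be the main obstacle: keeping the domination invariant alive through cascades, especially in the case where the extra brush's cascade has already cleaned a vertex that the simulated strategy wants to use (forcing an off-script real move), is where the abelian property of the brushing process has to be invoked with care.

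Granting the lemma, the inductive step for a non-terminal position $P$ consists of two short chains, each applying the recursions together with the inductive hypothesis at the positions $P^{+v}$ (all of which have strictly smaller $\Phi$). First, by the max-recursion and the continuation lemma,
\[
  \widehat\bg(P)=1+\max_{v}\bg(P^{+v})\ \le\ 1+\bg(P),
\]
so $\bg(P)-\widehat\bg(P)\ge-1$. Second, fix any $w\in V(P)$; by the min-recursion $\bg(P)\le 1+\widehat\bg(P^{+w})$, by the inductive hypothesis $\widehat\bg(P^{+w})\le\bg(P^{+w})+1$, and by the max-recursion $\bg(P^{+w})\le\widehat\bg(P)-1$, so
\[
  \bg(P)\ \le\ 1+\widehat\bg(P^{+w})\ \le\ 2+\bg(P^{+w})\ \le\ 1+\widehat\bg(P),
\]
giving $\bg(P)-\widehat\bg(P)\le 1$. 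Hence $\size{\bg(P)-\widehat\bg(P)}\le 1$ for every position, and in particular for $(G,\mathbf{0})$.
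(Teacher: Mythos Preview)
Your proposal is correct and takes essentially the same approach as the paper. Your ``continuation lemma'' is the paper's Lemma~\ref{lem:domination} (specialized to configurations differing by one brush), and you identify the same simulation/domination-invariant argument as the crux. The one difference is organizational: the paper derives both inequalities directly from the two versions of the domination lemma---e.g., $\bg(G)=1+\widehat\bg(G;g)\le 1+\widehat\bg(G)$ for the second direction---whereas you use only the $\bg$-version of the continuation lemma and replace the symmetric application by the inductive chain $\bg(P)\le 1+\widehat\bg(P^{+w})\le 2+\bg(P^{+w})\le 1+\widehat\bg(P)$. Since you already state the $\widehat\bg$-version of the continuation lemma, you could drop the induction entirely and argue as the paper does; your potential $\Phi$ is then unneeded (though your claim $\Phi(P^{+v})\le\Phi(P)-1$ is correct: writing $S$ for the set of vertices that fire, one checks $\Phi(P)-\Phi(P^{+v})=2e(S,\overline S)+\sum_{u\in S}(\deg_H(u)-f(u))\ge |S|\ge 1$).
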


Theorem~\ref{thm:at_most_one} is best possible.  For the star $K_{1,3k-1}$, we have $\bg(K_{1,3k-1}) = 2k-1$ and $\widehat \bg(K_{1,3k-1}) = 2k$; in particular, we have $\bg(P_3) = 1$ and $\widehat \bg(P_3) = 2$.  On the other hand, for $n \ge 3$ we have $\bg(C_n)=3$ and $\widehat \bg(C_n)=2$.  (We remark without proof that the graph $G_k$ obtained by taking $5k$ triangles and identifying a single vertex of each has $\bg(G_k) = 8k+1$ and $\widehat \bg(G_k) = 8k$; this yields another family---with unbounded brush number---witnessing sharpness of the bound.  We leave the details to the reader.)

For the remainder of the paper, we will be concerned primarily with the asymptotics of $\bg$ over various families of graphs.  Hence the difference between $\bg$ and $\widehat \bg$ is unimportant; we use whichever is most convenient (but prefer $\bg$ in general).

Because the game produces a feasible solution to the original problem, the value of the game parameter is bounded by that of the original optimization parameter.
\begin{proposition}\label{prop:trivial_bounds}
Always $b(G) \le \bg(G) \le 2b(G)-1$ and $b(G) \le \widehat \bg(G) \le 2b(G)$.
\end{proposition}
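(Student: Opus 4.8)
The plan is to treat the lower and upper bounds separately: the lower bounds follow almost immediately from the definition of $b(G)$, while each upper bound comes from a strategy for Min tracked by a potential function. For the lower bounds, let $w$ be the multiset of all brushes placed during the game, so that $w(v)$ is the total number of brushes ever placed on $v$ by either player; the number of turns played equals $|w|$, so it suffices to show that $w$, regarded as an initial configuration, cleans $G$. This is essentially automatic: disregarding which player moves, the game is just an instance of the cleaning process in which brushes are introduced one at a time, interleaved with forced firings, and since introducing an extra brush never prevents a graph from being cleaned while the game terminates with every vertex clean, placing all of $w$ at the outset also cleans $G$. (More self-containedly: if $u_1, \dots, u_n$ is the order in which vertices fire during the game, then when $u_i$ fires it holds at most $w(u_i)$ directly-placed brushes plus one brush from each already-fired neighbour, and this total is at least its number of not-yet-fired neighbours; rearranging yields $w(u_i) \ge \max\{0, d^+(u_i) - d^-(u_i)\}$ with $d^+, d^-$ counting later and earlier neighbours in the order $u_1, \dots, u_n$, and summing gives $|w| \ge b(G)$ through the usual characterisation of the brush number.) Hence $\bg(G) \ge b(G)$ and $\widehat\bg(G) \ge b(G)$.

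For the upper bounds, I would attach to each game position a potential $\beta(H, c)$, the least number of additional brushes needed so that the current configuration $c$ on the current dirty graph $H$ cleans $H$. Three observations drive the argument. First, initially $\beta = b(G)$, and the game is over exactly when $\beta = 0$: if $\beta = 0$ while $H$ is nonempty, the first vertex of a cleaning order of $(H,c)$ would already be ready to fire, contradicting that game positions are stable configurations. Second, $\beta$ never increases, whether a brush is added (anywhere) or a cascade of forced firings occurs. Third, on her turn Min can force $\beta$ to drop by at least $1$ --- she plays a brush on any vertex carrying positive weight in an optimal ``top-up'' configuration. Granting these, the bookkeeping is routine: in the Min-start game $\beta \le b(G) - j$ after Min's $j$th move, so Min makes at most $b(G)$ moves and Max at most $b(G) - 1$, giving $\bg(G) \le 2b(G) - 1$; the Max-start game merely pays for one extra Max move at the front (which cannot lower $\beta$), so $\widehat\bg(G) \le 2b(G)$. (If $b(G) = 0$ then $G$ is edgeless and the game is empty, so we may assume $b(G) \ge 1$.)

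The main obstacle is the firing half of the second observation: if a forced firing of a vertex $x$ takes $(H, c)$ to $(H', c')$, I must show $\beta(H', c') \le \beta(H, c)$. Here I would take an optimal top-up $y$ for $(H, c)$ and invoke the fact that the order in which vertices fire does not change the distribution of brushes on dirty vertices: re-sequence a cleaning of $(H, c + y)$ so that $x$ fires first. Then $x$ holds, and sends out, exactly the same brushes as in the game, so what remains is a cleaning of $(H', c')$ using only the restriction of $y$ to $H'$, of size at most $|y| = \beta(H, c)$; iterating over the cascade gives the claim. The ``brush added'' half of the second observation is immediate from the fact that extra brushes never hurt, and the third observation is then easy: if $y$ is an optimal top-up with $y(v) \ge 1$, then $v$ is a dirty vertex, and after Min plays there the configuration $y - \mathbf{1}_v$, of size $\beta - 1$, is still a valid top-up; by the second observation the ensuing forced firings do not undo this.
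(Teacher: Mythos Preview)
Your proof is correct and follows the same underlying idea as the paper: for the lower bounds, the brushes played during the game form a configuration that cleans $G$; for the upper bounds, Min simply plays toward a configuration realizing $b(G)$. The paper dispatches the proposition in a single clause --- ``because Min can use her first $b(G)$ turns to play brushes in some configuration realizing $b(G)$'' --- and does not address the edge case where a vertex Min intends to play on has already fired. Your potential-function formulation with $\beta(H,c)$ handles this cleanly: observation~2 (firings never raise $\beta$) together with observation~3 (Min can always drop $\beta$ by one on a still-dirty vertex) absorbs the cascade issue that the paper's sketch leaves implicit. So the route is the same, but your write-up is more careful than the paper's.
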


In Section~\ref{sec:pre}, we show that these bounds, though elementary, are best possible in a strong sense: 

\begin{theorem}\label{thm:A}
For every rational number $r$ in $[1,2)$, there exists a graph $G$ such that 
$$
\frac {\bg(G)}{b(G)} = r.
$$
\end{theorem}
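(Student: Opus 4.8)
The plan is to exhibit, for every target ratio, a graph that is a disjoint union of paths and edges. Write $r = p/q$ in lowest terms with $q \le p < 2q$; the case $p = q$ is trivial (take $G = K_2$, so $b(G) = \bg(G) = 1$), so assume $q < p < 2q$. For nonnegative integers $a,n$ with $n \ge 1$ and a large integer $k$, consider $G = a K_2 \cup n P_k$. Since a single brush placed at an endpoint of a path (or an edge) cleans it, and $b$ is additive over connected components (each of which needs at least one brush), $b(G) = a+n$. The crux will be the identity
\[
\bg\bigl(a K_2 \cup n P_k\bigr) = a + 2n - 1, \qquad\text{valid once } k \ge 2(a+n)+2 .
\]
Granting this, I finish by choosing parameters: the equation $\frac{a+2n-1}{a+n} = \frac pq$ rearranges to $a(p-q) = n(2q-p) - q$, and since $\gcd(p-q,q)=1$ the right side is a nonnegative multiple of $p-q$ as soon as $n \equiv 1 \pmod{p-q}$ and $n \ge q/(2q-p)$. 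So I take $n = 1 + (p-q)t$ for $t$ large, set $a = \bigl(n(2q-p)-q\bigr)/(p-q) \in \N$ and $k = 2(a+n)+2$; then $\bg(G)/b(G) = r$.

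To prove the identity I would analyze the two natural strategies. For the upper bound, Min clears components greedily: on each turn she plays an endpoint of some not-yet-cleared $P_k$ if one remains, and otherwise plays in a $K_2$. A brush at a path endpoint cascades along the whole path regardless of which interior vertices already hold brushes, so after at most $n$ of Min's turns every $P_k$ is gone; thereafter only $K_2$'s survive and \emph{every} brush placed finishes one, so at most $a$ further turns occur, for a total of at most $2n-1+a$. For the lower bound, Max ``poisons'': whenever some uncleared $P_k$ has an interior vertex carrying no brush, he places a brush there, which triggers no cascade (a degree-two vertex with one brush is stable); only if there is no uncleared $P_k$ at all does he play in a $K_2$. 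The point of taking $k$ large is that the total number of brushes in the game is at most $2b(G)-1 < k-2$, so no path ever gets brushes on all $k-2$ of its interior vertices; hence every uncleared $P_k$ is a legal poisoning target and Max is never forced to clear a $P_k$ himself. Therefore all $n$ path-clearings fall to Min, using $n$ distinct turns of hers, so the last path is not cleared before turn $2n-1$; on each of Max's turns before that an uncleared $P_k$ is present, so he poisons, producing at least $n-1$ brushes of pure overhead. Since every brush is either a poison or a ``clearing'' brush (on a $K_2$, on a path endpoint, or on an interior vertex already holding one brush) and exactly $a+n$ clearing brushes are needed---one per component---at least $(a+n)+(n-1) = a+2n-1$ brushes are played.

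The step I expect to demand the most care is this final counting, to be made airtight against an \emph{arbitrary} Min strategy: one must verify that Max's poisoning never backfires (the uncleared $P_k$'s stay stable, which is exactly where $k \gg a+n$ enters), that Min can never clear two components with a single brush (immediate, as the components are vertex-disjoint), and that the bookkeeping ``$\#(\text{clearing brushes}) = a+n$ and $\#(\text{Max's poisons}) \ge n-1$'' survives even if Min herself wastes moves poisoning paths. Everything else---additivity of $b$, the cascade behaviour of paths and edges, and the elementary arithmetic that produces integers $a,n,k$ from $p/q$---is routine.
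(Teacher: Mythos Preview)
Your argument is correct. The key identity $\bg(aK_2\cup nP_k)=a+2n-1$ for $k\ge 2(a+n)+2$ holds for exactly the reasons you give: every brush either clears a component or is a ``poison'' on an empty interior path vertex; Min can always clear a component on her turn (upper bound), while Max can always poison as long as some path survives (lower bound). The only place to be careful is justifying that Max's poison strategy stays well-defined, and your appeal to Proposition~\ref{prop:trivial_bounds} handles this: when Min plays her best response, the game lasts at most $2b(G)-1<k-2$ turns, so no path ever has all its interior vertices occupied. The final accounting---exactly $a+n$ clearing brushes, and at least $n-1$ poisons by Max before Min's $n$th turn---then gives the lower bound against every Min strategy.

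Your construction is genuinely different from the paper's. The paper builds a \emph{connected} graph $G_{n,k}$ by taking the $n$-sunlet and subdividing $k$ of its pendant edges $n$ times each, then shows $b(G_{n,k})=n$ and $\bg(G_{n,k})=n+k-1$; the analysis routes through an auxiliary lemma (Lemma~\ref{lem:ratio}) about the game on disjoint unions of combs with a prescribed initial configuration. Your approach trades connectedness for simplicity: using disconnected $aK_2\cup nP_k$ lets you avoid the comb lemma entirely, and the whole argument reduces to the elementary poison/clear dichotomy. The paper's version is a slightly stronger statement (it realizes every ratio with a connected $G$), while yours is shorter and more transparent.
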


We next turn our attention to complete graphs.  Our next main result (proved in Section~\ref{sec:complete}) provides the asymptotic behavior of the game brush number for $K_n$.

\begin{theorem}\label{thm:complete}
$\bg(K_n) = (1+o(1)) n^2/e$
\end{theorem}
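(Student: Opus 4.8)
The plan is to establish matching asymptotic upper and lower bounds; since Proposition~\ref{prop:trivial_bounds} together with $b(K_n)=\lfloor n^2/4\rfloor$ only sandwiches $\bg(K_n)$ between roughly $n^2/4$ and $n^2/2$, a direct analysis of the game is needed. The key preliminary observation is that, by the symmetry of $K_n$, a position is determined up to isomorphism by the sorted list $b_1\le b_2\le\cdots\le b_k$ of brush counts on the $k$ remaining (dirty) vertices, the current firing threshold being $k-1$. A short induction on the steps of a cascade shows that from such a position no vertex fires unless $b_k\ge k-1$, in which case exactly the top $m$ vertices are removed, where $m$ is the largest integer with $b_{k-j}\ge k-1-2j$ for all $0\le j<m$. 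Consequently a ``flat'' profile at level $\ell$ loses only its single top vertex unless $\ell$ is within $O(1)$ of $k$, whereas a profile whose top is an arithmetic progression with common difference at most $2$ cascades all the way down. This dichotomy drives everything: Min wants long cheap cascades, Max wants a flat profile so that each firing removes just one vertex.

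For the upper bound I would have Min always add a brush to a dirty vertex of largest current count --- that is, pile on a single ``target'', re-choosing after each firing. I claim Max's best reply is essentially to keep the remaining vertices balanced: directing brushes to the target only hastens a firing, while concentrating them elsewhere creates a second spike that fires in a cascade as soon as the target does, and either is bad for Max. Granting this, the game splits into phases indexed by the number $k$ of dirty vertices; if $\ell_k$ denotes the (approximately flat) level of the non-target vertices at the start of phase $k$, then phase $k$ lasts $m_k=k-1-\ell_k$ rounds, during which Max spreads $m_k$ brushes over the other $k-1$ vertices, after which the target fires and adds $1$ to every survivor. This gives the recursion $\ell_{k-1}-\ell_k=1+m_k/(k-1)=2-\ell_k/(k-1)$ and a cost of $2m_k+O(1)$ brushes in phase $k$. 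Writing $k=xn$ and $\ell_k=nL(x)$ turns this into the differential equation $L'(x)=L(x)/x-2$ with $L(1)=0$, whose solution is $L(x)=-2x\ln x$. The profile reaches the threshold --- so a single further firing clears everything remaining --- at the first $x$ with $L(x)=x$, namely $x_0=e^{-1/2}$, after which only $o(n^2)$ more brushes are placed. Hence the total number of brushes placed is $(1+o(1))\sum_{k_0\le k\le n}2m_k=(1+o(1))\,2n^2\int_{x_0}^{1}x(1+2\ln x)\,dx$; since $x(1+2\ln x)$ has antiderivative $x^2\ln x$, this equals $(1+o(1))\,2n^2(-x_0^2\ln x_0)=(1+o(1))\,n^2/e$ using $x_0^2=1/e$ and $\ln x_0=-1/2$.

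For the lower bound I would have Max always add a brush to a dirty vertex of smallest current count. The point is that, against any Min strategy, this balancing keeps the profile within $o(n)$ of flat apart from at most $o(n)$ ``spikes'' that Min has not yet fired, and Min cannot afford --- in brushes spent --- to maintain enough simultaneous spikes of the right heights to ignite a long cascade before the flat level $\ell_k$ has climbed to within $o(n)$ of $k$; moreover Min's moves only push the profile upward, which can only help Max. Tracking $\ell_k$ under Max's strategy against an arbitrary opponent therefore leads to the same equation (now as a bound on the evolution of a suitable potential), so the game cannot end before $(1-o(1))n^2/e$ brushes have been placed. Together with the upper bound this yields $\bg(K_n)=(1+o(1))n^2/e$; the $O(1)$-per-phase terms, the small-$k$ regime, tie-breaking, integrality, and the gap between $\bg$ and $\widehat\bg$ all contribute only $o(n^2)$.

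The main obstacle is making the two ``best reply'' claims rigorous, i.e.\ upgrading the differential-equation heuristic to a bound valid against every opponent strategy rather than the anticipated one. Concretely one must (i) control how far the true sorted profile can drift from the trajectory $nL(k/n)$ after an adversarial move sequence --- a coupling/exchange argument identifying ``balancing'' as the extremal reply, combined with a discrete Gr\"onwall-type estimate for the accumulated error over the $\Theta(n^2)$ turns --- and (ii) rule out Min strategies that build shallow staircases instead of single spikes, by verifying that near $x_0$ the two options cost the same to leading order while away from $x_0$ the single-spike option is cheaper. The differential-equation computation and the endgame are then routine.
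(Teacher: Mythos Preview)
Your framework coincides with the paper's: the paper also proposes the \emph{greedy strategy} for Min (always pile onto the vertex with the most brushes) and the \emph{balanced strategy} for Max (always play on the vertex with the fewest), analyzes their interaction via an equivalent differential equation (the paper tracks $f(t)=y/n^2$ with $t$ the fraction cleaned, you track the level $L(x)$ with $x$ the fraction dirty; one checks $f(t)=(1-t)L(1-t)$), and obtains the same constant $1/e$ at the stopping point $1-t_0=x_0=e^{-1/2}$.

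The gap you yourself flag is, however, essentially the entire content of the theorem: the differential-equation computation is a two-paragraph lemma in the paper, and all the work goes into proving the two best-reply claims. Your sketches for (i) and (ii) --- an exchange/coupling plus Gr\"onwall estimate, and a separate argument against ``staircase'' Min strategies --- are too vague to evaluate and do not match what the paper actually does. For the lower bound the paper proves that greedy is Min's exact optimal reply to balanced by maintaining, over two parallel games, the prefix-sum invariant $B_k=\sum_{i\le k} f(v_i)\ge\sum_{i\le k} f'(v_i)=B'_k$ for every $k$ (sorted decreasing), which forces the ideal game to have at least as many clean vertices at every time. For the upper bound the paper abstracts to a one-parameter family of games $\mathcal{BG}(w,h,t)$ (board of width $w$, triangle of height $h$, $t$ free initial Max moves) and proves by induction on $w$ that balanced is Max's exact optimal reply to greedy; the induction step compares the two games at the moment the first column fires and carefully handles the case where several columns have already fired in the real game. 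Neither argument uses error-tracking along the DE trajectory or a Gr\"onwall-type bound; both are exact combinatorial comparisons.

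So: right strategies, right asymptotic analysis, but the optimality proofs --- which you correctly identify as the obstacle --- require ideas you have not supplied, and the paper's actual mechanisms (the $B_k$ prefix-sum domination and the $\mathcal{BG}(w,h,t)$ induction) are worth studying.
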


Finally, we move to random graphs. The \emph{random graph} $\G(n,p)$ consists of the probability space $(\Omega, \mathcal{F}, \Pr)$, where $\Omega$ is the set of all graphs with vertex set $\{1,2,\dots,n\}$, $\mathcal{F}$ is the family of all subsets of $\Omega$, and for every $G \in \Omega$,
$$
\Pr (G) = p^{|E(G)|} (1-p)^{{n \choose 2} - |E(G)|} \,.
$$
This space may be viewed as the set of outcomes of ${n \choose 2}$ independent coin flips, one for each pair $(u,v)$ of vertices, where the probability of success (that is, adding edge $uv$) is $p.$ Note that $p=p(n)$ may (and usually does) tend to zero as $n$ tends to infinity.  All asymptotics throughout are as $n \rightarrow \infty $ (we emphasize that the notations $o(\cdot)$ and $O(\cdot)$ refer to functions of $n$, not necessarily positive, whose growth is bounded). We say that an event in a probability space holds \emph{asymptotically almost surely} (or \emph{a.a.s.}) if the probability that it holds tends to $1$ as $n$ goes to infinity.

Our main result here is the following:

\begin{theorem}\label{thm:random_brushing}
For $p = p(n) \gg \ln n / n$ and $G \in \G(n,p)$, a.a.s. 
$$
\bg(G) = (1+o(1))p\bg(K_n) = (1+o(1))pn^2/e.
$$
\end{theorem}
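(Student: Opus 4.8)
The plan is to interpose a \emph{fractional} (edge-weighted) version of the brushing game between $K_n$ and $\G(n,p)$. Think of the complete graph $K_n$ with every edge given weight $p$: a vertex fires once its accumulated brush mass reaches its \emph{weighted} dirty-degree, and on firing it sends $p$ units along each incident dirty edge; players add $p$ units of brush per turn. Dividing all masses, weights, and turn-sizes by $p$ turns this into the ordinary game on $K_n$, so the value of the weighted game (the total brush mass played under optimal play) is exactly $p\,\bg(K_n)$, which equals $(1+o(1))pn^2/e$ by Theorem~\ref{thm:complete}. This weighted model is the natural common relaxation of the games on $K_n$ and on $\G(n,p)$: in $\G(n,p)$ a dirty vertex $v$ has, for every not-too-small set $S$ of dirty vertices, about $p|S|$ neighbours in $S$, which is precisely its weighted dirty-degree in the model above. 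The theorem will follow once we show $\bg(\G(n,p))$ is within a $(1+o(1))$ factor of $p\,\bg(K_n)$.

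First I would record the quasi-randomness of $\G(n,p)$ that the coupling needs: for $p \gg \ln n/n$, a.a.s.\ every vertex has degree $(1+o(1))pn$, and, more to the point, $|N_G(v)\cap S| = (1+o(1))p|S|$ for the sets $S$ that actually occur as dirty sets during play (see the last paragraph). I would also discard the ``endgame'' at the outset: once at most $\eps n$ vertices remain dirty, each dirty vertex has dirty-degree $O(p\eps n + \ln n)$, hence absorbs that many player-brushes before firing, so the whole endgame contributes $O(\eps^2 p n^2 + \eps n\ln n) = o(pn^2)$ as $\eps \to 0$; the same estimate retires any vertex whose current dirty-degree has dropped well below $pn$. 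Thus it suffices to control the ``main'' phase, where every dirty vertex still has dirty-degree $(1+o(1))p\cdot(\#\text{dirty})$.

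The core is a two-sided coupling / strategy-transfer argument. For the upper bound $\bg(\G(n,p)) \le (1+o(1))\,p\,\bg(K_n)$, Min plays on $G \in \G(n,p)$ by simulating a (genuinely) optimal Min strategy in the weighted game on the $p$-weighted $K_n$: she maintains a fictitious weighted-game position, translates each of her moves back to $G$ (placing one brush where the weighted strategy would place $p$ units, and absorbing the $O(1)$ integrality/granularity discrepancy, which is negligible since $pn^2 \gg n \gg 1$), and interprets Max's moves and every induced cascade of firings in $G$ as the corresponding weighted-game events. Since $|N_G(v)\cap S| \le (1+o(1))p|S|$ throughout the main phase, each vertex of $G$ fires no later (in accumulated brush mass) than its weighted-game counterpart, so the game on $G$ ends having used $(1+o(1))\,p\,\bg(K_n)$ brushes regardless of Max's play. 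The lower bound is symmetric: Max simulates an optimal weighted-game Max strategy on $G$, and $|N_G(v)\cap S| \ge (1-o(1))p|S|$ forces the game on $G$ to last essentially as long as the weighted game. Combining the two directions with the value $p\,\bg(K_n) = (1+o(1))pn^2/e$ yields the theorem.

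The main obstacle is exactly the quasi-randomness input to the coupling. What must be controlled is $|N_G(v)\cap S|$ where $S$ is the current dirty set, and $S$ is influenced by the players, hence effectively adversarial; a naive union bound over all $2^n$ candidate sets $S$ costs $2^n e^{-\Omega(\delta^2 pn)}$, which is not $o(1)$ throughout the range $p \gg \ln n/n$ (indeed the uniform-over-subsets quasi-randomness is simply false when $p = o(1)$). The way around this is to exploit that $S$ evolves in a structured, monotone manner and to expose the edges of $G$ online — in the spirit of the differential-equation-method analyses of brushing on $\G(n,p)$ — revealing the dirty edges at $v$ only when $v$'s firing is actually tested, so that the relevant deviation bound is invoked only $O(n)$ times (once per firing, plus bookkeeping per turn) rather than $2^n$ times. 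Making this rigorous while the two players interact with the partially-exposed instance is the technical crux; in particular one must be careful that the transferred strategy consults only already-exposed information and that the opponent, even when allowed to react to the full graph, cannot steer the dirty set away from the weighted-game trajectory by more than $o(pn^2)$. The remaining ingredients — the weighted-game reformulation, the exact scaling $p\,\bg(K_n)$, and the endgame truncation — are routine.
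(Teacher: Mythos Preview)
Your high-level plan---interpose a fractional/weighted game on $K_n$ and couple it on one side with the ordinary game on $K_n$ and on the other with the game on $\G(n,p)$---is exactly the paper's architecture. But two of your steps, as written, do not go through.

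\textbf{The weighted game does not couple turn-for-turn with $G$.} In your weighted game, players place $p$ units per turn; dividing by $p$ indeed recovers the ordinary game on $K_n$, so its length is $\bg(K_n)$ turns (total mass $p\,\bg(K_n)$). But the game on $G$ has players placing one brush per turn, so a one-move-for-one-move coupling compares a game of length $\approx p\,\bg(K_n)$ with a game of length $\bg(K_n)$: after $t$ common turns a vertex $v$ carries $t_v + |N_G(v)\cap\text{clean}| \approx t_v + p\cdot(\text{\#clean})$ brushes in $G$ and must reach $\approx p\cdot(\text{\#dirty})$, whereas in your weighted game it carries $p t_v + p\cdot(\text{\#clean})$ mass with the same threshold; the firing conditions on $t_v$ differ by a factor of $p$, so the games desynchronize immediately. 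The intermediate that \emph{does} couple turn-for-turn with $G$ is the fractional game in which players still place \emph{one} brush per turn but the threshold is $p$ times the dirty degree and each firing sends $p$ per edge. That game is \emph{not} a rescaling of the ordinary $K_n$ game, and showing its value is $(1+o(1))p\,\bg(K_n)$ is a genuine lemma: one runs the fractional and ordinary games at a $1:\lfloor 1/p\rfloor$ turn ratio and controls the per-vertex discrepancy through an auxiliary chip-stacking argument, proving it is $O(\ln n)$. Your line ``absorbing the $O(1)$ integrality/granularity discrepancy'' is exactly where this work is hidden, and it is not $O(1)$.

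\textbf{The union bound over cleaning sequences does succeed; online exposure is unnecessary.} Your instinct that one cannot afford concentration of $|N_G(v)\cap S|$ uniformly over all $2^n$ sets $S$ is right, but the fix is combinatorial rather than an online-revelation scheme. What must be controlled is $\deg^-_\pi(v_i)=|N_G(v_i)\cap\{v_1,\dots,v_{i-1}\}|$ for the realized cleaning order $\pi$. For a \emph{fixed} permutation $\pi$ these $n$ variables are mutually independent (each edge $v_av_b$ with $a<b$ feeds into exactly one of them), so the probability that more than $n/\omega^{1/5}$ of them deviate by an $\omega^{-1/5}$-fraction is at most $2^n\exp(-\omega^{2/5}\ln n)^{\,n/\omega^{1/5}}=\exp\big(O(n)-\omega^{1/5}n\ln n\big)=o(1/n!)$, where $\omega=pn/\ln n\to\infty$. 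A union bound over all $n!$ cleaning sequences then gives the a.a.s.\ statement, and the at most $n/\omega^{1/5}$ ``bad'' vertices contribute only $o(pn^2)$ to the Oracle's correction. No restriction on what the adversary sees is needed.
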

\noindent The result is proved in Section~\ref{sec:random}.

\section{Preliminaries and relation to the brush number}\label{sec:pre}

We begin with a formal definition of the brushing game, along with some terminology used throughout the paper.  To facilitate reasoning about the game, we define it in greater generality than was mentioned in the introduction.  A {\em configuration} of a graph $G$ is an assignment of some nonnegative integer number of brushes to each vertex of $G$; we represent a configuration by a map $f : V(G) \rightarrow \N \cup \{0\}$.

\begin{definition}
The {\em brushing game} on a graph $G$ with {\em initial configuration} $f$ has players {\em Max} and {\em Min}.  Initially, all vertices of $G$ are deemed {\em dirty}, and each vertex $v$ contains $f(v)$ brushes.  

The players alternate turns.  At the beginning of each turn, the player whose turn it is adds one brush to any dirty vertex.  After this, if some vertex $v$ has at least as many brushes as dirty neighbors, then $v$ {\em fires}: one brush is added to each dirty neighbor of $v$, and $v$ itself is marked {\em clean}.  (The process of firing $v$ is also referred to as {\em cleaning} $v$.)  Vertices continue to fire, sequentially, until no more vertices may fire, at which point the turn ends.  (If two or more vertices are simultaneously able to fire, then the order of firings is chosen arbitrarily---although we will see in Observation~\ref{obs:basic} that the order does not matter.)  If at this point all vertices of $G$ are clean, then the game ends.

When both players play optimally, the number of brushes placed during the game is the {\em game brush number} of $G$ with initial configuration $f$, denoted $\bg(G;f)$ if Min takes the first turn and by $\widehat \bg(G;f)$ when Max does.  When $f$ is identically zero, we write $\bg(G)$ in place of $\bg(G;f)$ and refer to it as the {\em game brush number of $G$}.  When Min (resp.\ Max) starts the game, we sometimes refer to the process as the {\em Min-start} (resp.\ {\em Max-start}) game.  

In the Min-start (resp.\ Max-start) game, a {\em round} of the game consists of one turn by Min (resp.\ Max) and the subsequent turn by Max (resp.\ Min).
\end{definition}

We say that $G$ can be {\em cleaned by} the configuration $f$ when some list of vertex cleanings, starting from $f$, results in every vertex of $G$ being cleaned.  Given configurations $f$ and $g$ of a graph $G$, we say that $f$ {\em dominates} $g$ provided that $f(v) \ge g(v)$ for all $v \in V(G)$.

The following well-known facts will be of use: 
\begin{observation}[\cite{MNP}]\label{obs:basic}
For any graph $G$,
\begin{enumerate}
\item Let $f$ and $g$ be configurations of $G$ such that $f$ dominates $g$.  If $G$ can be cleaned by $g$, then it can also be cleaned by $f$.
\item If $G$ can be cleaned by $f$, then every maximal list of vertex firings (starting from $f$) cleans all vertices of $G$. 
\end{enumerate}
\end{observation}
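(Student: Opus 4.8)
The plan is to derive both parts from a single \emph{path-independence} observation about the firing process. Fix a configuration $f$ and suppose that, starting from $f$, we have fired some set $S$ of vertices in an arbitrary legal order; let $v \notin S$. I claim that $v$ currently carries exactly $f(v) + |N(v) \cap S|$ brushes and has exactly $|N(v) \setminus S|$ dirty neighbors, \emph{independently of the order} in which the vertices of $S$ were fired. The point is that the edge $uv$ becomes clean precisely when $u$ or $v$ fires; since $v$ has not fired, $uv$ is clean exactly for those $u \in S$ adjacent to $v$. This both gives the dirty-degree count and shows that $v$ received one brush from each fired neighbor, none from elsewhere, and sent out none (not having fired). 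The claim follows by a one-line induction on $|S|$.

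The consequence I want is that whether $v \notin S$ is \emph{ready to fire} after $S$ has been fired depends only on $S$: it holds iff $f(v) + |N(v)\cap S| \ge |N(v)\setminus S|$, i.e. iff $f(v) + 2\,|N(v)\cap S| \ge \deg_G(v)$. The right-hand side is constant and the left-hand side is nondecreasing in $S$, so readiness is \emph{monotone} --- once $v$ becomes ready it stays ready. Part (2) follows: let $\sigma$ be a legal firing sequence from $f$ that cleans $G$, firing $v_1,\ldots,v_n$ in that order, and let $\tau$ be any maximal legal firing sequence from $f$, with fired set $T$. An induction on $i$ shows $v_i \in T$: assuming $v_1,\ldots,v_{i-1}\in T$, observe that when $\sigma$ fired $v_i$ the fired set was $\{v_1,\ldots,v_{i-1}\}\subseteq T$, so $v_i$ satisfies the readiness condition for that set and hence, by monotonicity, for $T$; were $v_i\notin T$, then $v_i$ would be ready in the terminal configuration reached by $\tau$, contradicting maximality. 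Thus $T=V$, i.e. $\tau$ cleans $G$.

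Part (1) comes from the same readiness formula. Suppose $G$ is cleaned by $g$ via the sequence firing $v_1,\ldots,v_n$, and let $f$ dominate $g$. I replay this sequence from $f$: for each $i$, the inequality $g(v_i) + 2\,|N(v_i)\cap\{v_1,\ldots,v_{i-1}\}| \ge \deg_G(v_i)$ witnessing that $v_i$ was ready under $g$ gives the same inequality with $f$ in place of $g$ since $f(v_i)\ge g(v_i)$, so $v_i$ is ready under $f$. Hence the sequence is legal from $f$ and fires every vertex, so $G$ is cleaned by $f$. (Equivalently, one can run the two processes in lockstep, maintaining the invariant that the configuration reached from $f$ dominates that reached from $g$ on the common set of still-dirty vertices; this is preserved because firing a vertex subtracts the same amount from it and adds one to the same neighbors in both processes.)

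The one genuine pitfall worth flagging is that the firing process is \emph{not} order-independent at the level of the full configuration: firing adjacent vertices $u$ then $v$ leaves a different brush count on $v$ than firing $v$ then $u$, since in the first case $uv$ is already clean when $v$ fires and so $v$ sends out one fewer brush. What is order-independent is exactly the data used above --- the brush counts and dirty-degrees of the still-dirty vertices --- and brush counts on already-cleaned vertices never matter again. Once this is isolated, both parts are short and involve no real calculation; getting the bookkeeping in the path-independence step exactly right is the only subtle point.
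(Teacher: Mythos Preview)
The paper does not prove this observation; it is stated as a known fact with a citation to \cite{MNP} and no argument is given. Your proof is correct and self-contained: the path-independence claim (that an unfired vertex $v$ carries $f(v)+|N(v)\cap S|$ brushes and has $|N(v)\setminus S|$ dirty neighbors after any legal firing of a set $S$) is exactly the right abstraction, and the monotonicity-of-readiness argument cleanly yields both parts. Your closing remark about the pitfall---that order-independence holds only for the data restricted to still-dirty vertices, not for brush counts on cleaned vertices---is a genuinely useful clarification that preempts a natural misreading.
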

Fact (2) above implies that in the brushing game, it is the multiset of brushes placed that determines whether the game has ended; the order of moves is irrelevant.

We also need the following result from~\cite{MNP}.
\begin{theorem}[\cite{MNP}, Theorem 3.1]\label{thm:odd_degree}
The brush number of any graph is at least half the number of vertices of odd degree.
\end{theorem}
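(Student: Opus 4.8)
The plan is to fix an optimal cleaning of $G$: a configuration $f$ with $\sum_{v \in V(G)} f(v) = b(G)$ by which $G$ can be cleaned, together with a fixed order in which the vertices fire (such an order exists, and fires every vertex, by Observation~\ref{obs:basic}(2)). Orient each edge $uv$ from the endpoint that fires earlier to the endpoint that fires later; this gives an (acyclic) orientation of $G$. Write $d^-(v)$ and $d^+(v)$ for the in- and out-degree of $v$ under this orientation, so that $d^-(v) + d^+(v) = \deg(v)$.

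The crux is a lower bound on $f(v)$ for each vertex $v$. At the moment $v$ fires, its dirty neighbors are exactly those that fire after $v$, so it has $d^+(v)$ dirty neighbors and therefore carries at least $d^+(v)$ brushes. On the other hand, the only brushes $v$ ever receives are one from each neighbor that fires before it does (once $v$ is clean it receives nothing, since no brush crosses a clean edge), so at the moment $v$ fires it carries exactly $f(v) + d^-(v)$ brushes. Hence $f(v) + d^-(v) \ge d^+(v)$, and since also $f(v) \ge 0$, we obtain $f(v) \ge \max\{0,\, d^+(v) - d^-(v)\}$.

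Now sum over $v$. Since $\sum_v d^+(v) = \sum_v d^-(v) = |E(G)|$, the quantities $x_v := d^+(v) - d^-(v)$ sum to zero, and for any real numbers summing to zero one has $\sum_v \max\{0, x_v\} = \tfrac12 \sum_v |x_v|$ (because $\max\{0,x\} = \tfrac12(x + |x|)$). Therefore
\[
b(G) = \sum_{v \in V(G)} f(v) \ \ge\ \sum_{v \in V(G)} \max\{0,\, d^+(v) - d^-(v)\} \ =\ \frac12 \sum_{v \in V(G)} \bigl| d^+(v) - d^-(v) \bigr|.
\]
Finally, $d^+(v) - d^-(v) = \deg(v) - 2 d^-(v)$ has the same parity as $\deg(v)$, so $|d^+(v) - d^-(v)| \ge 1$ whenever $v$ has odd degree. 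Discarding the even-degree vertices from the last sum gives $b(G) \ge \tfrac12 \cdot \size{\{v \in V(G) : \deg(v) \text{ is odd}\}}$, which is the claim.

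There is no serious obstacle here; this is a clean counting argument and the only points requiring care are bookkeeping ones: verifying that a vertex about to fire holds exactly $f(v) + d^-(v)$ brushes (in particular that it has received nothing from later-firing neighbors and has sent nothing yet), and the elementary identity turning $\sum \max\{0, x_v\}$ into $\tfrac12 \sum |x_v|$ under the constraint $\sum x_v = 0$. If one prefers to avoid that identity, an equivalent packaging is to split $V(G)$ into the set of vertices with $d^+(v) > d^-(v)$ and the set with $d^+(v) < d^-(v)$, note that the total ``surplus'' over the first set equals the total ``deficit'' over the second, and observe that each odd-degree vertex contributes at least $1$ to whichever of these two sets contains it; this again produces the factor $\tfrac12$.
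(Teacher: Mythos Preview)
Your argument is correct. It is, however, a genuinely different packaging from the one the paper sketches. The paper's explanation (immediately following the statement) is via \emph{brush paths}: in any cleaning, each brush traverses a path, these paths are edge-disjoint and together cover $E(G)$, and every odd-degree vertex must be an endpoint of at least one such path; since each path has two endpoints, the number of brushes is at least half the number of odd-degree vertices. Your proof instead orients $G$ by firing order and counts the imbalance $d^+(v)-d^-(v)$ directly, obtaining the intermediate inequality $b(G)\ge \tfrac12\sum_v |d^+(v)-d^-(v)|$ before specializing to the parity bound.

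Both arguments are short, but they buy slightly different things. The brush-path picture is the one the paper actually reuses later (for instance in the proof of Theorem~\ref{thm:A}, where ``brush paths'' are invoked to lower-bound the number of brushes on the threads), so it is the more natural lead-in here. Your orientation argument, on the other hand, yields the sharper intermediate bound in terms of total imbalance---precisely the connection to ``minimum total imbalance'' alluded to in the introduction---and does so without needing to argue that brush trajectories are paths.
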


\bigskip

Now we are ready to prove Theorem~\ref{thm:at_most_one}.  We obtain the theorem as a simple consequence of a more general lemma.  

\begin{lemma}\label{lem:domination}
Let $f$ and $g$ be configurations of a graph $G$.  If $f$ dominates $g$, then $\bg(G;f) \le \bg(G;g)$ and $\widehat \bg(G;f) \le \widehat \bg(G;g)$.  Moreover, $\bg(G;g) - \bg(G;f) \le 2 \sum_v (f(v) - g(v))$ and $\widehat \bg(G;g) - \widehat \bg(G;f) \le 2 \sum_v (f(v) - g(v))$.
\end{lemma}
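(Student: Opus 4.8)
I would prove Lemma~\ref{lem:domination} by induction on the number of turns remaining — or, what amounts to the same thing, by induction on the total deficiency $D := \sum_v(f(v) - g(v))$ together with a secondary induction capturing the progress of the game. Since $f$ dominates $g$, we may write $f$ as obtained from $g$ by adding brushes one at a time; by a telescoping argument it suffices to handle the case where $f$ and $g$ differ at a single vertex $w$, with $f(w) = g(w) + 1$ and $f(v) = g(v)$ for $v \neq w$. So the real content is: \textbf{adding one extra brush anywhere changes $\bg(G;\cdot)$ by at most $0$ in one direction and at most $2$ in the other.} The telescoping then yields the stated bounds, with the factor $2\sum_v(f(v)-g(v))$ appearing because each of the $D$ single-brush steps costs at most $2$.

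The heart of the matter is a strategy-stealing / coupling argument played between the ``rich'' game (starting from $f$) and the ``poor'' game (starting from $g$), where $f = g + \mathbf{1}_w$. For the inequality $\bg(G;f) \le \bg(G;g)$: Min, playing the rich game, simulates an optimal strategy in the poor game. The subtlety is that the rich position already has a surplus brush at $w$, so the two games can desynchronize — the rich game may trigger firings earlier. The clean way to handle this is to maintain the invariant that, after each round, the rich configuration dominates the poor configuration \emph{modulo the firings that have already occurred}; equivalently, think of the extra brush at $w$ as a ``credit'' that can only help. When it's Min's move in the rich game, Min copies Min's optimal move in the poor game; when it's Max's move, whatever Max does in the rich game is mirrored by Min treating it as Max's move in the poor game (this is legitimate because in the rich game it is Max who is the adversary, and we only need an upper bound on the length). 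One shows that the rich game terminates no later than the poor game — intuitively because the rich game is always ``at least as clean'' — which gives $\bg(G;f) \le \bg(G;g)$, and similarly for $\widehat\bg$.

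For the other direction, $\bg(G;g) - \bg(G;f) \le 2$ (in the single-extra-brush case), the coupling runs the other way: Min in the poor game simulates Min's optimal strategy in the rich game, but now the poor game is ``missing'' the brush at $w$. The idea is that Min can afford to spend one of Min's own turns placing a brush on $w$ — restoring the domination — and thereafter mirror the rich game exactly; this costs one extra Min-turn, hence potentially one extra round, i.e.\ at most $2$ extra turns, once one accounts for parity and for the possibility that the repair brush is placed on a turn that would otherwise have been ``wasted.'' Care is needed because the repair move must be made before the missing brush at $w$ actually matters (i.e.\ before $w$ would have fired in the rich game), but since Min controls when to make it, Min can always insert it in time. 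The $\widehat\bg$ case is identical up to who moves first.

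\textbf{Main obstacle.} The genuinely delicate point is the bookkeeping in the coupling when firings cascade: a single extra brush can set off a chain reaction that removes many vertices in the rich game but not (yet) in the poor game, so the two game states are no longer configurations on the same graph, and ``domination'' must be interpreted on the common dirty subgraph. Making the invariant precise — something like ``the dirty subgraph of the rich game is a subgraph of that of the poor game, and on the common part the rich configuration dominates the poor one'' — and checking that it is preserved under (a)~a firing cascade and (b)~each player's move, is the crux. Observation~\ref{obs:basic}, especially fact~(1) (monotonicity of cleanability under domination) and fact~(2) (order-independence of firings), is exactly what licenses these manipulations; the rest is careful induction. I expect the single-extra-brush reduction plus a clean statement of the coupling invariant to be $80\%$ of the work, with the final parity accounting for the ``$\le 2$'' being routine but fiddly.
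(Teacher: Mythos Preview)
Your plan is correct and matches the paper's approach in all essentials: the coupling argument (Min in the $f$-game mirrors an optimal $g$-game) for $\bg(G;f)\le\bg(G;g)$, and the ``repair then play optimally'' idea for the reverse bound, are exactly what the paper does. Two minor packaging differences are worth noting. First, the paper skips the single-brush reduction and handles arbitrary $f,g$ in one pass, phrasing the invariant precisely as $f^*(v)-c_f(v)\ge g^*(v)-c_g(v)$ for every vertex dirty in both games (with $c_f,c_g$ the number of clean neighbors in each game); this formulation makes the cascade bookkeeping you flag as the main obstacle essentially automatic, since firing a neighbor of $v$ leaves $f^*(v)-c_f(v)$ unchanged. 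Second, for the inequality $\bg(G;g)-\bg(G;f)\le 2\sum_v(f(v)-g(v))$ the paper avoids a second coupling entirely: Min simply spends her first $\sum_v(f(v)-g(v))$ turns placing the missing brushes, after which the configuration dominates $f$ and the already-proved first inequality finishes the job.
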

\begin{proof}
We prove $\bg(G;f) \le \bg(G;g)$ and $\bg(G;g) - \bg(G;f) \le 2 \sum_v (f(v) - g(v))$; the proofs of the other inequalities are nearly identical.  Consider two instances of the brushing game on $G$: the {\em $f$-game}, in which we have initial configuration $f$, and the {\em $g$-game}, in which we have initial configuration $g$.  Intuitively, the additional brushes initially present in the $f$-game cannot lengthen the game.  However, for a formal proof, more care is needed: these extra brushes could cause some vertices to fire earlier in the $f$-game, which could invalidate some of Min's desired moves (since she cannot add brushes to clean vertices).

We give a strategy for Min in the $f$-game.  Min plays the $f$-game and $g$-game simultaneously; the $f$-game is the ``real'' game in which both players play, while the $g$-game is ``imagined'' by Min to guide her play in the $f$-game.  At all times, we denote by $f^*$ the current configuration of the $f$-game and by $g^*$ the current configuration of the $g$-game. Likewise, for all $v \in V(G)$, at all times $c_f(v)$ and $c_g(v)$ denote the number of clean neighbors of $v$ in the $f$-game and in the $g$-game, respectively.

Min aims to ensure that the $f$-game finishes no later than the $g$-game.  She maintains two invariants: after each turn, 
\begin{description}
\item[(1)] $f^*(v)-c_f(v) \ge g^*(v)-c_g(v)$ for each vertex $v$ that is dirty in both games, and
\item[(2)] every clean vertex in the $g$-game is also clean in the $f$-game.
\end{description}
Note that firing the neighbor of some dirty vertex $v$ increases $f^*(v)$ by 1 and decreases $c_f(v)$ by 1, so invariant (1) is maintained under firing a vertex (in both games).  We claim that invariant (2) follows from invariant (1). We prove this through induction on the number of turns played.  Clearly, invariant (2) holds at the beginning of the game.  Fix some nonnegative integer $t$ and suppose both invariants hold after $t$ turns; we show that invariant (2) must also hold after $t+1$ turns.  Consider the state of both games at the beginning of turn $t+1$, before any vertices have fired.  A clean vertex $v$ may fire in the $f$-game if and only if $f^*(v) \ge \deg(v) - c_f(v)$ or, equivalently, $f^*(v)-c_f(v) \ge \deg(v) - 2c_f(v)$.  Likewise, $v$ may fire in the $g$-game if and only if $g^*(v) \ge \deg(v) - c_g(v)$.  Suppose now that vertices $x_1, x_2, \ldots, x_k$ fire in the $g$-game, in that order.  Consider each $x_i$ in turn.  If $x_i$ is already clean in the $f$-game, then after it fires in the $g$-game, both invariants still hold.  On the other hand, if $x_i$ is dirty in the $g$-game, then 
$$f^*(x) - c_f(x) \ge g^*(x) - c_g(x) \ge \deg(x) - 2c_g(x) \ge \deg(x) - 2c_f(x),$$
where the first inequality follows from invariant (1), the second from the assumption that $x_i$ can fire in the $g$-game, and the last from invariant (2).  Thus $x_i$ may fire in the $f$-game as well, and again both invariants hold.  It follows that invariant (2) holds after all of the $x_i$ have fired and, hence, after turn $t+1$.  Thus we need only explicitly verify that invariant (1) holds throughout the game, since this implies that invariant (2) holds as well.

Since $f$ dominates $g$, invariant (1) holds initially.  Subsequently, on Max's turns, Max plays in the $f$-game, after which Min imagines the same move in the $g$-game.  (Note that every valid move in the $f$-game is valid also in the $g$-game, since every dirty vertex in the $f$-game is also dirty in the $g$-game.)  This clearly maintains invariant (1).  On Min's turns, Min chooses some optimal move in the $g$-game; suppose she adds a brush to vertex $v$.  If $v$ is clean in the $f$-game, then Min plays the same move there, which maintains invariant (1).  Otherwise, Min plays any valid move in the $f$-game.  (If there are no valid moves in the $f$-game, then the $f$-game has ended no later than the $g$-game, as desired.)  In this case, $v$ must have been clean in the $f$-game but dirty in the $g$-game, so $v$ itself has no bearing on the invariant; indeed, since we consider only those vertices that are dirty in both games, invariant (1) is maintained.

Min follows an optimal strategy for the $g$-game, so that game lasts for at most $\bg(G;g)$ turns.  Throughout the game, every clean vertex in the $g$-game is also clean in the $f$-game; hence the $f$-game finishes no later than the $g$-game.  Consequently $\bg(G;f) \le \bg(G;g)$, as claimed.

Finally, to prove $\bg(G;g) - \bg(G;f) \le 2 \sum_v (f(v) - g(v))$, we observe that Min could apply the following strategy: starting from configuration $g$, for each vertex $v$, iteratively add up to $f(v)-g(v)$ brushes to $v$ (as long as $v$ remains dirty).  This requires at most $\sum_v (f(v) - g(v))$ turns by Min (and hence an equal number of turns by Max).  The resulting configuration dominates $f$, so from this point, Min can end the game in at most $\bg(G;f)$ additional turns.
\end{proof}

Lemma~\ref{lem:domination} formalizes the intuition that extra brushes cannot hinder Min nor help Max. As a consequence, when seeking an upper bound on $\bg(G)$ for some graph $G$, we may ``ignore'' brushes that have been placed, pretending that they simply do not exist; this cannot shorten the remainder of the game.  This can be quite useful when we want to give a strategy for Min but do not want to worry about some or all of Max's moves.  Likewise, when we seek a lower bound on $\bg(G)$, we may pretend that the graph contains extra brushes that were never actually played.  Conversely, ignoring a brush can increase the length of the game by at most 2, and adding a brush can decrease the length of the game by at most 2.

Lemma~\ref{lem:domination} also yields a quick proof of Theorem~\ref{thm:at_most_one}:

\begin{proof}[Proof of Theorem~\ref{thm:at_most_one}.]
Let Max play an optimal first move in the Max-start game, and denote the resulting configuration by $f$.  The remainder of the game can be viewed as a Min-start game with initial configuration $f$.  Thus by Lemma~\ref{lem:domination}, $\widehat \bg(G) = \bg(G;f)+1 \le \bg(G)+1$.

Similarly, let Min play an optimal first move in the Min-start game, and denote the resulting configuration by $g$.  This time, Lemma~\ref{lem:domination} yields $\bg(G) = \widehat \bg(G;g)+1 \le \widehat \bg(G)+1$.  This completes the proof.
\end{proof}

\bigskip
Recall from Proposition~\ref{prop:trivial_bounds} that always $b(G) \le \bg(G) \le 2b(G)-1$ and $b(G) \le \widehat \bg(G) \le 2b(G)$, because Min can use her first $b(G)$ turns to play brushes in some configuration realizing $b(G)$.  Thus always $1 \le \bg(G) / b(G) < 2$.  It is natural to ask whether the set of all such ratios, over all connected graphs $G$, is dense in $[1,2)$. 

The constructions we use to answer this question make use of combs and sunlets.  The {\em $n$-comb} $B_n$ and {\em $n$-sunlet} $S_n$ are the graphs obtained from $P_n$ and from $C_n$, respectively, by attaching one pendant leaf to each vertex.  
\begin{lemma}\label{lem:ratio}
Fix $n_1, n_2, \ldots, n_m$, all at least 2.  Let $G$ be the disjoint union of $B_{n_1}, B_{n_2}, \ldots, B_{n_m}$.  Suppose furthermore that, in each component of $G$, the two vertices of degree 2 contain one brush each, while all other vertices are empty.  If we play the brushing game from this initial configuration, then the number of turns needed to clean $G$ is $\sum_i n_i - m$, regardless of which player moves first.
\end{lemma}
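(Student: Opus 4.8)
The plan is to prove matching bounds, both equal to $D_0:=\sum_i n_i-m$; write $f$ for the initial configuration described in the statement. In a comb $B_n$ write $w_1,\dots,w_n$ for the spine (the vertices of the underlying path $P_n$, in path order) and $z_1,\dots,z_n$ for the leaves, with $z_i$ adjacent to $w_i$; the two degree-$2$ vertices of $B_n$ are then $w_1$ and $w_n$, which carry the two initial brushes. For the \emph{lower bound} it suffices to show that cleaning a single $B_n$ from $f$ needs at least $n-1$ extra brushes: disjointness then forces at least $D_0$ extra brushes for $G$, and since every finished play of the game produces a cleanable configuration (the remark preceding Proposition~\ref{prop:trivial_bounds}), every play lasts at least $D_0$ turns, whoever starts. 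Playing one brush on each of $z_1,\dots,z_{n-1}$ shows $n-1$ suffices, so the content is the reverse inequality. For this I would use the brush-path description following Theorem~\ref{thm:odd_degree}: a successful cleaning of $B_n$ using $2+s$ brushes partitions $E(B_n)$ into $P$ edge-disjoint nontrivial paths and creates, in addition, $T$ single-vertex paths, one for each brush that never moves, so $2+s=P+T$. Since $B_n$ has $2n-2$ vertices of odd degree, $P\ge n-1$; the extra $2$ comes from $w_1$ and $w_n$. At $w_1$ the number of nontrivial paths having $w_1$ as an endpoint is $0$ or $2$: if it is $0$ then $w_1$ is interior to its path, so the brush $f$ places at $w_1$ never moves and contributes to $T$; if it is $2$ then $w_1$ contributes $1$ to $P-(n-1)$. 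Either way $w_1$ --- and symmetrically $w_n$ --- forces a surplus of at least $1$, so $2+s=P+T\ge n+1$ and $s\ge n-1$.

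For the \emph{upper bound} I would give Min a strategy forcing the game to end within $D_0$ turns, from either start. Call a dirty induced subgraph a \emph{standard comb} if it is a copy of some $B_k$ with $k\ge 2$ carrying one brush on each of its two degree-$2$ spine vertices; playing a brush on a degree-$2$ spine vertex of a standard $B_k$ makes it fire and cascade, clearing the comb entirely if $k=2$ and otherwise leaving a standard $B_{k-1}$. Min maintains the invariant that \emph{immediately after each of her moves the dirty subgraph of $G$ is a vertex-disjoint union of standard combs}; this holds initially, since then the dirty subgraph is exactly $f$. When the invariant holds and the game continues, Min plays on a degree-$2$ spine vertex of any dirty comb, preserving the invariant. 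The only moves by which Max can violate the invariant are playing on a degree-$3$ dirty spine vertex $w_j$ (which triggers no fire) or on its leaf $z_j$; Min answers with $z_j$ in the first case and $w_j$ in the second, which makes $w_j$ fire and splits the comb containing it into two standard combs (a piece of size $1$ thrown off by a split clears within the same cascade), restoring the invariant. To bound the length, track $D:=(\text{number of dirty spine vertices})-(\text{number of connected components of the dirty subgraph})$: it starts at $D_0$, stays nonnegative, and vanishes exactly when the game ends, since until then the dirty subgraph is a union of combs of size $\ge 2$. A routine case check shows that each ``normal'' move --- any move by Min made under the invariant, and any move by Max on a degree-$2$ spine vertex or on its leaf --- decreases $D$ by exactly $1$; that a ``disruptive'' move by Max decreases $D$ by $0$; and that Min's repair move decreases $D$ by exactly $2$. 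Since every disruptive move is answered immediately by a repair move, if a play uses $g$ normal moves, $d$ disruptive ones and $r=d$ repairs, then $g+2r=D_0$, so the total number of turns is $g+d+r=g+2r=D_0$. With the lower bound this gives $\bg(G;f)=\widehat\bg(G;f)=D_0$.

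The main difficulty is the upper bound, specifically preventing Max from stalling. Feeding $f$ and a cleaning configuration into Lemma~\ref{lem:domination} only yields $\le 2D_0$, because it does not exploit that every brush Max plays is forced to help; Min's invariant is precisely what keeps the configuration ``taut'', and above all it never leaves a degree-$3$ spine vertex holding one brush when it is Max's turn --- exactly the situation in which Max could add a second, wasted brush. The remaining work is the routine verification that Min's repair move is always legal, always produces the intended cascade, and drops $D$ by exactly $2$ in every boundary situation: short combs, the end vertices $w_1$ and $w_n$, and splits that shed a $B_1$ which clears at once.
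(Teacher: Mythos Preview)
Your argument is correct, and the overall strategy for Min in the upper bound is the same as the paper's: play on a degree-$2$ spine end when the configuration is ``standard'', and when Max plays on a degree-$3$ spine vertex or its leaf, respond on the other of the pair.  The bookkeeping, however, is organized differently in both halves.

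For the lower bound the paper takes a slicker route: it observes that $G$ in configuration $f$ is exactly what one gets from the sunlet $S_{n+m}$ (where $n=\sum_i n_i$) after placing one brush on each of $m$ suitably chosen leaves and on their neighbours and letting those $2m$ vertices fire.  Since $S_{n+m}$ has $2(n+m)$ odd-degree vertices, Theorem~\ref{thm:odd_degree} gives $b(S_{n+m})\ge n+m$, so at least $n-m$ further brushes are needed.  Your direct brush-path count on each comb, with the parity analysis at $w_1$ and $w_n$, reaches the same bound without the auxiliary embedding; it is a little longer but entirely self-contained.

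For the upper bound the paper proceeds by induction on $\sum_i n_i$, re-identifying the residual graph after each round as another instance of the lemma.  You instead introduce the potential $D=(\text{dirty spine vertices})-(\text{dirty components})$ and verify that it drops by exactly one per normal move and exactly two per repair (with zero per disruption), which packages the same case analysis into a single invariant and avoids the inductive reload.  Both arguments handle the boundary cases ($B_2$ components, splits that shed a $B_1$) in the same way; your potential formulation makes the ``exactly $D_0$ turns regardless of Max'' conclusion immediate.
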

\begin{proof}
Let $n = \sum_i n_i$ and let $f$ denote the specified initial configuration of $G$.  We prove that $\bg(G;f)$ and $\widehat \bg(G;f)$ are bounded above and below by $n-m$.

We begin with the lower bound.  The graph $G$ can be obtained from $S_{n+m}$ by placing one brush on each of $m$ appropriately-chosen pendant leaves and their neighbors, then allowing these vertices to fire.  (Here we are supposing for convenience that clean vertices are deleted from the graph; we may do so because clean vertices no longer affect the game.)  Since $S_{n+m}$ contains $2(n+m)$ vertices of odd degree, Theorem~\ref{thm:odd_degree} yields $b(S_{n+m}) \ge n+m$.  Since $2m$ brushes have already been placed, the number of additional brushes needed to clean $G$ is at least $n-m$; this establishes the desired bound.

For the upper bound, we give a strategy for Min.  We use induction on $n$.  When $n=2$ the claim is clear, so suppose $n \ge 3$.  If Min plays first, then she plays on any vertex of degree 2.  If the component in which she played was isomorphic to $B_2$, then the entire component gets cleaned, and only $m-1$ components remain.  By the induction hypothesis, the number of additional turns needed is at most $(n-2) - (m-1)$, which simplifies to $n-m-1$, and the desired bound follows.  If instead the component was isomorphic to $B_k$ for $k \ge 3$, then only the vertex at which Min played and its pendant leaf fire, leaving a component isomorphic to $B_{k-1}$ with the desired initial configuration.  The induction hypothesis again shows that the game lasts at most $n-m-1$ more turns, and the desired bound again follows.

If instead Max plays first, then he has four options: he may play on a vertex of degree 2, on a pendant leaf attached to a vertex of degree 2, on a vertex of degree 3, or on a pendant leaf attached to a vertex of degree 3.  In the first two cases, the desired bound follows as before.  Otherwise, let $v$ be the vertex at which Max plays.  If $v$ has degree 3, then Min plays at its pendant leaf; if $v$ is itself a pendant leaf, then Min plays at its neighbor.  In either case, $v$ and its neighbor both fire, leaving a new graph $H$ of the specified form; say $H$ has $m'$ components and $2n'$ vertices.  Of the two vertices cleaned, let $x$ be the one with degree 3.  If $x$ has two neighbors of degree 2, then $x$ belongs to a copy of $B_3$ in $G$, all vertices of which fire.  Hence $n' = n-3$ and $m' = m-1$, so the number of turns needed to clean $H$ is at most $n'-m'$, which simplifies to $n-m-2$.  If instead $x$ has one neighbor of degree 2, then $x$, its neighbor, and their pendant leaves all fire.  Thus $n' = n-2$ and $m' = m$, so again the number of turns needed to clean $H$ is at most $n-m-2$.  Finally, if $x$ has no neighbors of degree 2, then $x$ and its neighbor fire, leaving two new components in $H$.  Now $n' = n-1$ and $m' = m+1$, so once again the number of turns needed to clean $H$ is at most $n-m-2$.  In any case, the total length of the game on $G$ is at most $2 + (n-m-2)$, which simplifies to $n-m$, as claimed.
\end{proof}

Before proceeding, we need one more technical lemma.

\begin{lemma}\label{lem:subdiv}
Let $f$ and $g$ be configurations of a graph $G$, and let $v$ be a vertex of degree 2 in $G$.  Suppose that $f(v) = 0$, $g(v) = 1$, and $f(u) = g(u)$ for all $u \in V(G) - \{v\}$.  If $G$ can be cleaned by $g$, then it can be cleaned by $f$.  
\end{lemma}
\begin{proof}
When some neighbor of $v$ fires, $v$ loses a dirty neighbor and gains a brush.  Hence, whether we start from $f$ or $g$, vertex $v$ may fire when and only when one of its neighbors has fired.  In both cases, when $v$ fires, it sends one brush to each remaining dirty neighbor.  Thus, after $v$ fires, the number of brushes on each remaining dirty vertex is independent of which configuration we started from.  It follows that any list of vertex firings that cleans $G$ starting from $g$ also cleans $G$ starting from $f$.
\end{proof}

We are now ready to prove Theorem~\ref{thm:A}.

\begin{proof}[Proof of Theorem~\ref{thm:A}.]
For positive integers $k$ and $n$, with $k \le n$, let $G_{n,k}$ be obtained from $S_n$ by choosing any $k$ consecutive pendant edges and subdividing each one $2n$ times.  We show that by choosing $n$ and $k$ appropriately, we can make the ratio $\bg(G_{n,k}) / b(G_{n,k})$ take on any rational value in $[1, 2)$.  Since $G_{n,k}$ has $2n$ vertices of odd degree, Theorem~\ref{thm:odd_degree} yields $b(G_{n,k}) \ge n$.  Conversely, it is easy to see that $n$ brushes suffice to clean $G_{n,k}$: place one brush on each of $n-1$ leaves and one brush on the neighbor of one of these vertices.  Thus, $b(G_{n,k}) = n$.  We claim that $\bg(G_{n,k}) = n+k-1$, from which it would follow that whenever $q \le p < 2q$, we have $\bg(G_{q,p-q+1}) = (q+(p-q))/q = p/q$.

To show that $\bg(G_{n,k}) \ge n+k-1$, we give a strategy for Max.  Call the pendant paths of length $n$ {\em threads}, and call the vertices of degree 2 {\em thread vertices}.  
For as many turns as he can, Max places a brush on any thread vertex that does not already have one.  Since $b(G_{n,k}) = n$, we may suppose that the game lasts no more than $2n-1$ turns.  Since each thread has $2n$ thread vertices, Max can adhere to this strategy until all of the threads have been cleaned; in particular, he never plays more than one brush on any thread vertex.  We may also suppose without loss of generality that Min played no brushes on thread vertices, since she could have achieved the same result by playing a brush that thread's leaf.  Thus, by Lemma~\ref{lem:subdiv}, we may ``ignore'' the brushes on thread vertices, in the sense that they have no impact on the duration of the game.  Once all the threads have been cleaned, Max plays arbitrarily.  Consider the state of the game just after the last thread is cleaned, and let $G'$ be the graph induced by the clean edges of $G$.  Each thread contains at least two vertices of odd degree in $G'$: the leaf necessarily has degree 1, and the other endpoint has either degree 1 or degree 3, depending on whether it has fired.  Since $G'$ has $2k$ vertices of odd degree, and since Max's brushes did not contribute to its cleaning, Min must have played at least $k$ brushes so far.  Consequently, Max plays at least $k-1$ brushes on thread vertices.  Since $b(G_{n,k}) = n$, and since Max plays at least $k-1$ brushes that do not contribute to the cleaning of the graph, at least $n+k-1$ brushes are played throughout the game.

For the upper bound, we give a strategy for Min.  While there are still threads with dirty vertices, Min places a brush on some such thread's leaf, cleaning the whole thread.  This happens at most $k$ times.  By Lemma~\ref{lem:domination}, since we seek an upper bound on the length of the game, we may ignore the brushes placed by Max during this time.  At this point, the graph consists of an $(n-k)$-sunlet in which one edge has been subdivided $k$ times, and in which each vertex of degree 2 contains one brush.  If it is Min's turn, then she plays on a vertex of degree 2.  This causes all vertices of degree 2 to fire, and produces a copy of $B_{n-k}$ having the initial configuration described in Lemma~\ref{lem:ratio}.  Since at most $2k$ brushes have already been played, and Lemma~\ref{lem:ratio} states that the remainder of the game lasts exactly $n-k-1$ turns, the length of the game is at most $2k + (n-k-1) = n+k-1$, as desired.  Likewise, we obtain the same bound if it is Max's turn, and he chooses to play on a vertex of degree 2.  

If it is Max's turn and he chooses to play on a leaf, then Min plays on its neighbor; if Max chooses to play on a vertex of degree 3, then Min plays on its pendant leaf.  In either case, we have a graph of the form specified in Lemma~\ref{lem:ratio}, except that one component contains the subdivided edge.  We claim that, despite these extra vertices, the game still lasts for at most $n-k-1$ more turns.  The vertices on the subdivided edge must eventually be cleaned in one of two ways: either by cleaning one of the incident vertices of degree 3, or by placing an extra brush directly on one of the vertices of degree 2.  In both cases, once one of these vertices fires, the rest follow.  In the former case, the subdivided edge had no impact on the game.  In the latter case, the resulting graph has exactly the form required by Lemma~\ref{lem:ratio}; in cleaning the subdivided edge, one turn has been spent, and the number of components has increased by 1.  Either way, the strategies from Lemma~\ref{lem:ratio} still show that at most $n-k-1$ more turns are needed.  Hence the length of the game is at most $2k+(n-k-1) = n+k-1$, as claimed.  This completes the proof.
\end{proof}

\section{Complete graphs}\label{sec:complete}

We next turn our attention to complete graphs.  To simplify the presentation of our main result, we first present both players' optimal strategies, we next analyze the performance of these strategies, and we save the proof of optimality for last.  We give two strategies, one for Max and one for Min.
\begin{itemize}
\item The {\em balanced strategy} for Max: on each turn, Max adds a brush to the dirty vertex having the fewest brushes.
\item The {\em greedy strategy} for Min: Min selects the dirty vertex having the most brushes, adds brushes to that vertex until it gets cleaned, and repeats.
\end{itemize}

As we will show later, these strategies yield a Nash equilibrium---that is, the balanced strategy is an optimal response to the greedy strategy and vice-versa.  Hence the game brush number of $K_n$ is precisely the length of the game when the strategies are employed against each other.  While there is no simple formula for $\bg(K_n)$, we can use differential equations to determine the asymptotics.

\begin{lemma}\label{lem:complete_DE}
Consider the brushing game on $K_n$.  When Max uses the balanced strategy and Min uses the greedy strategy, the game ends in $(1+o(1)) n^2/e$ turns.
\end{lemma}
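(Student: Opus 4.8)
The plan is to follow the game through a sequence of \emph{phases}. Call the game start, and each moment immediately after a firing, a \emph{breakpoint}; at a breakpoint with $m$ dirty vertices Min opens a new phase by choosing the dirty vertex $u$ with the most brushes and pumping it on each of her turns until it fires. Because $K_n$ is complete, $u$ has dirty degree $m-1$, so $u$ fires exactly when it reaches $m-1$ brushes; thus the phase consists of $\ell(m):=(m-1)-b_u$ turns by Min (where $b_u$ is the brush count of $u$ at the breakpoint), interleaved with $\ell(m)+O(1)$ turns by Max, and ends with the firing of $u$. First I would record an easy invariant, proved by induction on the number of dirty vertices: the balanced strategy keeps the brush counts of all dirty vertices within an interval of length $O(1)$ at every breakpoint (a firing raises every dirty vertex by one, and between firings Max repeatedly tops up the currently lowest vertices, who never include $u$ once Min begins pumping). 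In particular $b_u = B(m)/m + O(1)$, where $B(m)$ is the total number of brushes on the dirty vertices at the breakpoint with $m$ dirty vertices. Moreover, each surviving vertex right after $u$ fires holds at most $b_u + O(1) = (1+o(1))(-2m\ln(m/n)) + O(1)$ brushes against dirty degree $m-2$, so as long as $m/n$ stays bounded away from $e^{-1/2}$ from above this is strictly smaller than $m-2$ and no cascade occurs: exactly one vertex is removed and the next breakpoint has $m-1$ dirty vertices.

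Granting this, the core of the proof is a recursion for $B(m)$. In the phase opening at the breakpoint with $m$ dirty vertices, Min and Max each add $\ell(m) = m - B(m)/m + O(1)$ brushes, while the firing of $u$ only moves brushes among the survivors ($u$ fires holding exactly $m-1$ brushes, so it is deleted carrying none); hence
\[
B(m-1) = B(m) + 2\ell(m) = \Bigl(1-\tfrac{2}{m}\Bigr)B(m) + 2m + O(1).
\]
This is a discretization of the differential equation $dB/dm = 2B/m - 2m$. Writing $B(m)=n^2 w(m/n)$ and $x=m/n$ turns it into $w'(x)=2w(x)/x - 2x$ with $w(1)=0$, whose unique solution is $w(x)=-2x^2\ln x$. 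A routine application of the differential equations method then gives $B(m) = (1+o(1))\,n^2 w(m/n)$, and consequently $\ell(m) = (1+o(1))\,m\bigl(1+2\ln(m/n)\bigr)$, uniformly over $\eps n \le m \le n$ for any fixed $\eps>0$.

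It remains to locate the end of the game and sum the turns. As $m$ decreases from $n$, the maximum brush count $B(m)/m + O(1) = (1+o(1))(-2m\ln(m/n))$ increases and catches up with $m-2$ (the common dirty degree of the survivors) exactly when $-2\ln(m/n)\to1$, i.e.\ at $m = m^* := (1+o(1))\,e^{-1/2}n$; at that point, firing $u$ sets off a cascade that cleans all remaining vertices (the cascade cannot stall, since each firing raises every surviving brush count by one and lowers every dirty degree by one, so the margins only increase). No turns are played during the terminal cascade, so the total length of the game is
\[
\sum_{m=m^*}^{n}\bigl(2\ell(m)+O(1)\bigr) = (1+o(1))\cdot 2\!\int_{m^*}^{n}\! m\Bigl(1+2\ln\tfrac{m}{n}\Bigr)dm + O(n) = (1+o(1))\,2n^2\!\int_{e^{-1/2}}^{1}\! x(1+2\ln x)\,dx .
\]
Since $\int x(1+2\ln x)\,dx = x^2\ln x$, the integral equals $\bigl[x^2\ln x\bigr]_{e^{-1/2}}^{1} = \tfrac{1}{2e}$, so the game lasts $(1+o(1))\,n^2/e$ turns.

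The main obstacle is the error control behind $B(m)=(1+o(1))n^2w(m/n)$: the true process is deterministic but messy, and one must show it stays within $o(n^2)$ of the ODE trajectory. The saving grace is that the recursion is contractive: a perturbation of size $O(1)$ introduced at the breakpoint with $k$ dirty vertices is damped by the factor $\prod_{j=m+1}^{k}(1-2/j)=\Theta((m/k)^2)$ by the time we reach $m$ dirty vertices, so the accumulated error in $B(m)$ is $O\bigl(\sum_{k\ge m}(m/k)^2\bigr)=O(m)=o(n^2)$. This must be combined with a crude bound on the contribution of the $O(\eps n)$-long transition region near $m\approx e^{-1/2}n$, where the clean phase structure degrades; that contribution is $O(\eps n^2)$ and is made to vanish by letting $\eps\to0$ after everything else.
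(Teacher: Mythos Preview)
Your argument is correct and is essentially the same approach as the paper's: both model the game as a sequence of phases between firings, use the balance invariant to write the phase length in terms of the total brush count, derive and solve the same differential equation (your parametrization by the number $m$ of dirty vertices and $B(m)$ is equivalent to the paper's parametrization by vertices cleaned $x$ and brushes placed $y$ via $m=n-x$, $B=y$, and your $w(x)=-2x^2\ln x$ is the paper's $f(t)=-2(1-t)^2\ln(1-t)$ under $x=1-t$), and identify the terminal cascade at $m/n\to e^{-1/2}$. Your treatment is in fact more careful than the paper's about the $O(1)$ errors and the contractive damping in the recursion; the paper simply sets up and solves the ODE without that level of rigor.
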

\begin{proof}
At all times throughout the game, let $x$ denote the number of vertices cleaned so far and $y$ the number of brushes placed.  We define the $k$th {\em phase} of the game to be the sequence of turns during which there are exactly $k$ clean vertices, starting with the turn after that on which the $k$th vertex fires and ending on the turn on which the $(k+1)$st vertex fires.

Consider the state of the game at the beginning of phase $x$ (and suppose the game has not yet finished). We analyze the length of this phase. With $x$ vertices cleaned, each remaining vertex must accumulate $n-x-1$ brushes before firing.  The strategies employed by both players ensure that no brushes have been ``left behind'' on clean vertices and that each of the $n-x$ remaining dirty vertices has the same number of brushes (up to a difference of 1).  Hence each dirty vertex has $\frac{y}{n-x} + O(1)$ brushes.  Min must therefore place another $n-x-\frac{y}{n-x} + O(1)$ brushes before the next vertex fires; since Max also plays, $2(n-x-\frac{y}{n-x}) + O(1)$ brushes are introduced in this phase.  

To investigate the asymptotic behavior of the game, we simplify the analysis by modeling the brushing game (a discrete process) using differential equations (a continuous model).  The argument above yields
$$y(x+1) - y(x) = 2 \left( n-x-\frac{y}{n-x} \right) + O(1).$$
We next normalize both parameters by letting $t = x/n$ and $f(t)=y/n^2$.  We thus arrive at the differential equation
$$f'(t) = 2\left (1-t-\frac{f(t)}{1-t}\right ), \quad f(0) = 0.$$
It is easily verified that the solution to this differential equation is $f(t) = -2(1-t)^2 \ln (1-t)$. The game ends during phase $(1+o(1))t_0n$, where $t_0$ is such that $f'(t_0) = 0$.  Solving for $t_0$ yields $t_0 = 1-e^{-1/2}$ and $f(t_0)=1/e$. Hence we expect that the number of brushes played throughout the game is $(1+o(1)) f(t_0) n^2 = (1+o(1)) n^2/e$.

For a formal proof, we now return to the original (discrete) model of the brushing game.  Fix a nonnegative integer $x$, and suppose that the game does not end before phase $x$.  As argued above,
$$2 \left( n - x - \frac {y(x)}{n-x} \right) - C \le y(x+1)-y(x) \le 2 \left( n - x - \frac {y(x)}{n-x} \right) + C$$
for some constant $C$.  Intuitively, since the ``true'' length of this phase is close to its value under the differential equation model, the ``true'' length of the game should likewise be close to the value suggested by the solution of the differential equation.  

We can prove this formally through induction on $x$.  For all nonnegative integers $x$ such that the game does not end before phase $x$, we claim that 
$$
y(x) = 2 (n-x)^2 \ln \left( \frac {1}{1-x/n} \right) + O(x).
$$
More precisely, we prove that $y(x) \le 2 (n-x)^2 \ln \left( \frac {1}{1-x/n} \right) + C'x$ for some constant $C'$; the proof of the analogous lower bound is similar.  The desired bound trivially holds when $x = 0$.  For the inductive step, fix $x \ge 0$, suppose that the game does not end before phase $x$, and assume that $y(x) \le 2 (n-x)^2 \ln \left( \frac {1}{1-x/n} \right) + C'x$.  Note that the game must end before phase $n/2$: once $n/2$ vertices have fired, the remaining vertices each have at least $n/2$ brushes and at most $n/2-1$ clean neighbors.  Hence we may suppose $x \le n/2$.  Now
\begin{eqnarray*}
y(x+1) &=& y(x) + \Big( y(x+1)-y(x) \Big)\\
&\le& 2 (n-x)^2 \ln \left( \frac {1}{1-x/n} \right) + C'x + 2 \left( n - x - \frac {y(x)}{n-x} \right) + C \\
&\le& 2 (n-x) \left( (n-x) \ln \left( \frac {1}{1-x/n} \right) + 1 - 2 \ln \left( \frac {1}{1-x/n} \right) \right) - \frac {2C'x}{n-x} + C'x + C \\
&=& 2 (n-x) \left( (n-x-2) \ln \left( \frac {1}{1-x/n} \right) + 1 \right) + C'x \left( 1 - \frac {2}{n-x} \right)+ C \\
&\le& 2 (n-x) \left( (n-x-2) \ln \left( \frac {1}{1-(x+1)/n} \right) + O \left( \frac {1}{n-x} \right) \right) + C'x + C \\
&=& 2 (n-x) (n-x-2) \ln \left( \frac {1}{1-(x+1)/n} \right) + C'x + O(1) \\
&=& 2 \Big( (n-x-1)^2+1 \Big) \ln \left( \frac {1}{1-(x+1)/n} \right) + C'x + O(1) \\
&=& 2 (n-(x+1))^2 \ln \left( \frac {1}{1-(x+1)/n} \right) + C'x + O(1),
\end{eqnarray*}
where the first and second inequalities follow from the induction hypothesis and the last line follows from the assumption that $x \le n/2$ (hence $\ln (1/(1-(x+1)/n)) = O(1)$).  If $C'$ is taken to be sufficiently large, then $C'x + O(1) \le C'(x+1)$, so the claimed bound holds.  A similar argument establishes the analogous lower bound on $y(x)$.  As with the differential equation model, the game ends immediately following phase $t_0$, where $t_0$ is the largest nonnegative integer such that $y(t_0+1) - y(t_0) > 0$.  Solving for $t_0$ yields $t_0 = (1+o(1))(1-e^{-1/2})n$, from which it follows that that $y(t_0+1) = (1+o(1)) n^2/e$.  Since exactly $y(t_0+1)$ brushes are played throughout the game, this completes the proof. 
\end{proof}

We now show that the strategies analyzed in Lemma~\ref{lem:complete_DE} are optimal for both players.  In this analysis, we focus on the number of brushes placed directly on each vertex (as opposed to those obtained from clean neighbors).  Moreover, we ``sort'' the vertices by the numbers of brushes played, in non-increasing order: always $v_1$ denotes the vertex having received the most brushes, $v_2$ the vertex having received the next most, and so on.  By the symmetry of $K_n$, we may suppose that moves are always played so as to avoid re-indexing of the vertices.  For example, if $v_i$ and $v_{i+1}$ have received the same number of brushes, then placing a brush on $v_{i+1}$ would force re-indexing: the old $v_i$ becomes the new $v_{i+1}$, and the old $v_{i+1}$ becomes the new $v_i$.  However, we may just as well suppose that the brush was placed directly on $v_i$, since this produces an isomorphic configuration. This assumption is equivalent to forbidding either player from adding a $k$th brush to $v_i$ until $k$ brushes have been played on $v_{i-1}$.  Since we track only the number of vertices played directly on each vertex, firing a vertex has no effect on the indexing.

Using these conventions, in the course of the game on $K_n$, we clean $v_1$ first, then $v_2$, and so on.  For $i < n/2$, vertex $v_i$ receives $i-1$ brushes from earlier neighbors and needs $n-i$ brushes to fire.  Hence $v_i$ fires when and only when the following two conditions have been met: first, vertices $v_1, v_2, \ldots, v_{i-1}$ are already clean; second, at least $n-2i+1$ brushes have been played on $v_i$.  Thus the game ends when and only when $n-2i+1$ brushes have been played on $v_i$ for all $i$ in $\{1, 2, \ldots, \floor{n/2}\}$.  Viewing this condition graphically yields a ``triangle'' of brushes that must be placed before the game ends (see Figure~\ref{fig:critical_triangle}).  We refer to this triangle as the {\em critical triangle}, to brushes placed within the triangle as {\em in-brushes}, and to brushes placed outside as {\em out-brushes}.  (Formally, the $k$th brush played on vertex $v_i$ is an in-brush if $k \le n-2i+1$ and an out-brush otherwise.)  Since the number of in-brushes played throughout the game is fixed, Max aims to force many out-brushes to be played, while Min aims to prevent this.

\begin{figure}[htbp]
%
%
%
%
%
\centering
\includegraphics{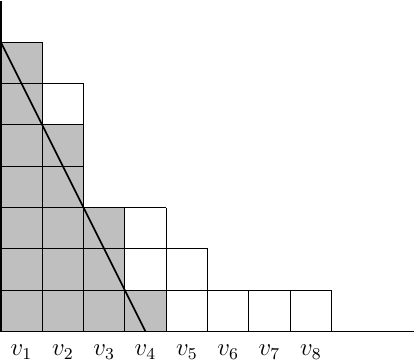}
\caption{An illustration of the critical triangle for $K_8$.  (In-brushes are shaded.)}
\label{fig:critical_triangle}
\end{figure}

Now we are ready to prove Theorem~\ref{thm:complete}.  Since the upper bound and lower bound use substantially different arguments, we split the proof into two parts.  We begin with the lower bound.

\begin{lemma}\label{lem:thm_complete_lower}
$\bg(K_n) \ge (1+o(1)) n^2/e$
\end{lemma}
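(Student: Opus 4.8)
\textbf{Proof proposal for Lemma~\ref{lem:thm_complete_lower}.}

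The plan is to exhibit an explicit strategy for Max that forces the game to last at least $(1+o(1))n^2/e$ turns, and the natural candidate is the balanced strategy already analyzed in Lemma~\ref{lem:complete_DE}. So the first step is to argue that, regardless of what Min does, if Max plays the balanced strategy then the game cannot end before roughly $n^2/e$ brushes have been placed. The key structural fact to exploit is the critical triangle: the game ends if and only if, for every $i \le \lfloor n/2 \rfloor$, at least $n-2i+1$ brushes have been played directly on $v_i$ (where the vertices are sorted in non-increasing order of brushes received). Equivalently, the game ends precisely when all ``in-brushes'' have been played, and there are $\sum_{i=1}^{\lfloor n/2\rfloor}(n-2i+1) = (1+o(1))n^2/4$ of them. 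This alone gives a lower bound of $n^2/4$, which is not enough; the point of Max's strategy is to force wasteful ``out-brushes.''

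The heart of the argument is to quantify how many out-brushes the balanced strategy forces. Here is the intuition I would make rigorous. When Max uses the balanced strategy, at every moment the dirty vertices are essentially balanced: if $x$ vertices have been cleaned and $y$ brushes placed, each of the $n-x$ remaining dirty vertices carries $\frac{y}{n-x}+O(1)$ brushes. Consider the moment just before $v_{x+1}$ fires (so $x$ vertices are clean). For $v_{x+1}$ to fire, Min must push it up to $n-x-1$ brushes total, i.e.\ to $n-2x-1+O(1)$ brushes placed directly on it; but by balance, \emph{every} dirty vertex simultaneously holds about $n-2x-1$ directly-placed brushes at this moment — whereas the critical triangle only ``credits'' the $i$-th-to-be-cleaned vertex with $n-2i+1$ in-brushes. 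Thus at the moment $v_{x+1}$ fires, every dirty vertex $v_j$ with $j > x+1$ already carries roughly $n-2x-1$ directly-placed brushes, of which only $n-2j+1$ count as in-brushes; the excess $2(j-x-1)+O(1)$ are out-brushes already locked in. Summing this waste appropriately (it suffices to count, for each vertex, the out-brushes present on it at the moment it fires) and then normalizing by $t=x/n$, $f(t)=y/n^2$ reproduces exactly the differential equation and its solution $f(t) = -2(1-t)^2\ln(1-t)$ from Lemma~\ref{lem:complete_DE}, giving a total of $(1+o(1))f(t_0)n^2 = (1+o(1))n^2/e$ brushes before the game can end.

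To turn this into a clean proof I would fix $\eps > 0$, have Max follow the balanced strategy, and track the quantity $y - y^{\mathrm{in}}$ (total brushes minus in-brushes) as vertices are cleaned, showing by a discrete argument that when $x = t_0 n$ vertices have been cleaned, $y = (1+o(1))(1/e)n^2$ — and that the game genuinely cannot have ended before this, since some in-brush (the apex region of the triangle, at $v_i$ with $i \approx t_0 n$) has not yet been placed. The main obstacle is controlling the $O(1)$ error terms uniformly: the balancing is only exact up to $\pm 1$ per vertex, Min may deviate, and we are summing $\Theta(n)$ such errors over $\Theta(n)$ phases, so I need the standard care (e.g.\ an error-tracking supermartingale-free argument, or simply bounding the accumulated discrepancy by $O(n)$ per phase and noting it is lower-order) to conclude the discrete process stays within $o(n^2)$ of the differential-equation trajectory. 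A secondary subtlety is that Max only controls his own moves, so ``balance'' must be stated as: after each of Max's moves the configuration is as balanced as possible given the history, which still suffices because Min adding a brush to an already-maximal vertex only increases the eventual out-brush count. Once the accounting is set up, the computation is exactly the one already done in Lemma~\ref{lem:complete_DE}, so no new analysis of the ODE is required.
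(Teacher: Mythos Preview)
Your high-level plan---have Max play the balanced strategy and argue that this forces $(1+o(1))n^2/e$ brushes regardless of Min's play---is exactly the paper's plan. The gap is in the step you flag as ``the main obstacle'' and then wave past. You assert that with Max balancing, ``at every moment the dirty vertices are essentially balanced: \dots each of the $n-x$ remaining dirty vertices carries $\frac{y}{n-x}+O(1)$ brushes.'' But this is precisely what was \emph{assumed} in Lemma~\ref{lem:complete_DE} by having \emph{both} players follow their named strategies; once Min is free to deviate, balance can fail badly on her half of the moves, and your one-line patch (``Min adding a brush to an already-maximal vertex only increases the eventual out-brush count'') neither covers all deviations nor is itself proved. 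The accounting you propose---tracking $y-y^{\mathrm{in}}$ phase by phase and summing waste---depends on the balance estimate at each phase boundary, so without it the ODE comparison does not go through.

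The paper closes this gap not by controlling error terms in a direct analysis, but by a reduction: it proves that the greedy strategy is Min's \emph{best response} to balanced Max, so that Lemma~\ref{lem:complete_DE} already gives the worst case. The technical device is a majorization invariant on the sorted brush counts: running the ``ideal'' game (Min greedy) and the ``real'' game (Min arbitrary) in parallel, one maintains $B_k \ge B'_k$ for every $k$, where $B_k$ and $B'_k$ are the partial sums $\sum_{i\le k} f(v_i)$ in the two games. From this it follows that every vertex clean in the real game is already clean in the ideal game, hence the ideal game ends no later. This invariant is what replaces your informal ``balance is maintained up to $O(1)$''; it is the missing idea in your proposal.
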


\begin{proof}
We show that when Max uses the balanced strategy, Min's optimal response is to play the greedy strategy.  Consequently, by Lemma~\ref{lem:complete_DE}, Max can enforce the claimed lower bound by using the balanced strategy.  

Fix $n$ and consider the brushing game on $K_n$ (where Max always uses the balanced strategy).  Call a strategy for Min {\em optimal} if no strategy ends the brushing game on $K_n$ in fewer turns.  Since Max's strategy is fixed, a strategy for Min can be viewed as the sequence of moves she plays throughout the game.  Call a move by Min {\em greedy} if it places a brush on the least-indexed dirty vertex and {\em non-greedy} otherwise.  It suffices to prove that the greedy strategy is optimal.

Fix an optimal strategy $A$ for Min.  If in fact $A$ is the greedy strategy, then there is nothing to prove.  Suppose instead that $A$ contains at least one non-greedy move, and suppose that the last such move is played on Min's $k$th turn.  We show how to transform $A$ into a new optimal strategy, $B$, that either has fewer non-greedy moves than $A$ or has the same number of non-greedy moves but with the last such move happening after Min's $k$th turn.  Iterating this transformation must eventually yield an optimal strategy with fewer non-greedy moves than $A$, and continuing to iterate must eventually yield the greedy strategy (that is, the unique strategy having no non-greedy moves).  Since the transformation always produces an optimal strategy, the greedy strategy must be optimal, as claimed.  

Before giving the construction of strategy $B$, we observe that Min's last move of the game must be greedy.  If Min's last move concludes the game, then with that move Min plays the final in-brush.  Consequently, on Min's last turn, only one in-brush remains to be played.  Under the greedy strategy, Min always plays an in-brush; since Min's final move places the last in-brush, this move must in fact be greedy.  Suppose instead that the game ends on one of Max's turns.  In this case, Max plays the final in-brush; say he places this brush on vertex $v_i$.  By optimality of $A$, Min did not an out-brush on her last turn, since otherwise she could have ended the game by playing the final in-brush.  Hence Min played an in-brush, say on vertex $v_j$.  Note that after Min's last turn, exactly $n-2i$ brushes have been played on $v_i$, exactly $n-2j+1$ have been played on $v_j$, and at most $n-2j$ have been played on $v_{j+1}$ (we must have $j+1 \le n$ since no in-brushes can ever be played on $v_n$).  It follows that $j < i$: otherwise, on his ensuing turn, Max would play another brush on $v_{j+1}$ rather than on $v_i$.  Thus, out of the two in-brushes remaining on Min's final turn, she played the one on the lower-indexed vertex, so her move was in fact greedy.

We now explain how to construct strategy $B$.  Define the {\em $A$-game} (resp.\ {\em $B$-game}) to be the instance of the game when Min follows strategy $A$ (resp. strategy $B$).  In the $B$-game, Min plays her first $k-1$ turns exactly as in the $A$-game.  Since Min's last turn in the $A$-game is greedy, she must play at least $k+1$ moves in that game.  Let us consider Min's $k$th and $(k+1)$st turns.  Say Min plays on vertex $v_i$ on her $k$th turn the $A$-game, and say that Max responds by playing on vertex $v_j$.  For her $k$th turn in the $B$-game, Min plays a greedy move, say on vertex $v_{i'}$; say that Max subsequently plays on vertex $v_{j'}$.  By assumption, $i \not = i'$.  

Suppose first that $j = j'$.  On Min's $k$th turn in the $A$-game, the greedy strategy dictates playing on $v_{i'}$, hence $v_{i'}$ is the least-indexed dirty vertex.  Neither Min's nor Max's $k$th move adds any brushes to $v_{i'}$, so it remains the least-indexed dirty vertex on Min's $(k+1)$st turn.  By assumption, in the $A$-game, Min plays greedily after turn $k$; thus, Min plays her $(k+1)$st turn on $v_{i'}$.  In the $B$-game, Min plays on $v_i$ for her $(k+1)$st turn and plays greedily thereafter.  After Min's $(k+1)$st turn the two games have identical configurations, proceed identically, and finish simultaneously.  Thus, strategy $B$ must be optimal.  Moreover, either Min plays fewer non-greedy moves than under strategy $A$ (if $v_i$ turns out to be a greedy move in the $B$-game) or she plays the same number of non-greedy moves but with the last one occurring on her $(k+1)$st turn (if $v_i$ is non-greedy).  

Now suppose $j \not = j'$.  Since Max uses the balanced strategy in both games, $j \not = j'$ implies $i = j'$.  In other words, the balanced strategy tells Max to play his $k$th turn on $v_{j'}$ unless Min plays there first.  Moreover, since Max uses the balanced strategy, if he ever causes a vertex to fire, subsequently all remaining vertices fire and the game ends.  If $j = i'$, then after Max's $k$th turn, both games have the same configuration.  Min henceforth plays greedily in the $B$-game and, as above, it follows that $B$ is an optimal strategy of the desired form.  Thus, suppose $j \not = i'$.  Since we have assumed that Min plays at least $k+1$ turns under the optimal strategy $A$, Max's $k$th turn must not cause any vertices to fire.  Hence $v_{i'}$ remains the greedy move on Min's $(k+1)$st turn in the $A$-game, so by assumption she plays there.  In the $B$-game, Min plays her $(k+1)$st move on $v_j$ and plays greedily thereafter: once again, both games have the same configuration, and the claim follows.  This completes the proof.  
\end{proof}

Our proof of the upper bound of Theorem~\ref{thm:complete} is more technical.  For this proof, we consider a generalization of the original game.  In our abstraction of the brushing game on $K_n$, players alternately place brushes on a game board, with the game ending once the ``critical triangle'' is full.  In particular, the board we have been considering has $n$ columns, and the critical triangle has height $n-1$.  To prove the upper bound, we consider the family of games $\mathcal{BG}(w,h,t)$.  Each such game is similar to the original, except that the game board has ``width'' $w$, the critical triangle has height $h$, and the game begins with $t$ consecutive turns by Max.  

\begin{definition}
The game $\mathcal{BG}(w,h,t)$ has two players, {\em Max} and {\em Min}, and is played on a rectangular game board divided into square {\em cells}.  The board consists of $w$ {\em columns} and $h$ {\em rows} of cells.  Initially, each cell of the board is deemed {\em empty}, and each column is deemed {\em dirty}.  

The players take turns filling cells of the game board.  On each turn, a player may fill the cell in row $i$, column $j$ provided that:
\begin{itemize}
\item column $j$ is still dirty, 
\item either $i=0$ or the cell in row $i-1$, column $j$ has already been filled, and
\item either $j = 1$ or the cell in row $i$, column $j-1$ has already been filled.  
\end{itemize}
When a player fills a cell in some column, we say that he or she {\em plays in} that column.  After each turn, each dirty column $k$ is deemed {\em clean} if that column has at least $h-2k+2$ filled cells, and either $k=1$ or column $k-1$ is already clean.  When a column changes from dirty to clean, we also say that the column {\em fires}.  Columns continue to fire until no more columns are able to fire.  Once all columns have fired, the game ends.

At the beginning of the game, Max takes $t$ consecutive turns.  After this, the players alternate turns, with Min taking the $(t+1)$st turn, Max taking the $(t+2)$nd, and so on.  Play continues in this manner until the game ends.  The $r$th {\em round} of the game consists of turns $t+2r-1$ and $t+2r$ -- that is, Min's $r$th turn and Max's subsequent turn. The {\em length} of the game is the number of turns taken throughout the course of the game.  Min aims to minimize the length of the game, while Max aims to maximize it.
\end{definition}

Note that $\mathcal{BG}(n,n-1,0)$ is equivalent to the original brushing game on $K_n$.  To upper-bound $\bg(K_n)$, we use the following strategy.  As in the proof of the lower bound, we play two instances of the brushing game: the ``real game'' and the ``ideal game''.  We play some number of turns in both games and view the remainders as as ``sub-games'' isomorphic to instances of $\mathcal{BG}(w,h,t)$ for appropriate choices of $w$, $h$, and $t$.  We then use induction to bound the lengths of the sub-games.  In what follows, we write ``$\bg^*(w,h,t)$'' to refer to the length of $\mathcal{BG}(w,h,t)$ when Min uses the greedy strategy and Max plays optimally.

\begin{lemma}\label{lem:thm_complete_upper}
$\bg(K_n) \le (1+o(1)) n^2/e$
\end{lemma}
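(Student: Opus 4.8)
The plan is to bound $\bg^*(n,n-1,0)$ from above, which suffices since Min may always elect to play the greedy strategy, so that $\bg(K_n)\le \bg^*(n,n-1,0)$. By Lemma~\ref{lem:complete_DE} the balanced-versus-greedy game on $K_n$ lasts $(1+o(1))n^2/e$ turns, so what remains to be shown is that, against Min's greedy strategy, no strategy of Max beats the balanced one by more than $o(n^2)$ turns.

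First I would prove, by induction on the board width $w$, a bound $\bg^*(w,h,t)\le g(w,h,t)$ for a function $g$ engineered to track the differential equation of Lemma~\ref{lem:complete_DE} plus an additive slack of order $n-w$ to absorb per-phase rounding, and satisfying $g(n,n-1,0)=(1+o(1))n^2/e$. The inductive step examines a single \emph{phase} of $\mathcal{BG}(w,h,t)$: the turns up to and including the firing of column~$1$. Since Min plays greedily she devotes every turn of the phase to column~$1$ (which, under the sorting convention, is the leader throughout), so the phase ends exactly when column~$1$ reaches $h$ brushes; its length, and the configuration left on columns $2,\dots,w$, are therefore controlled by how Max distributes his brushes during the phase. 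When column~$1$ fires it sends a brush to each surviving column, and the remainder is a sub-game isomorphic to $\mathcal{BG}(w-1,h-2,t')$, where $t'$ is the number of brushes then resting on those $w-1$ columns; the induction hypothesis bounds that remainder by $g(w-1,h-2,t')$.

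The crux is to verify that the phase strategy of Max that maximizes (phase length) $+\ g(w-1,h-2,t')$ is, up to an $O(1)$ error, the balanced one, so that the recursion one obtains is dominated termwise by the recursion for the balanced-versus-greedy game; in the normalized variables $s=(n-w)/n$ and $f(s)=y/n^2$ that recursion is exactly $f'=2\bigl(1-s-\tfrac{f}{1-s}\bigr)$, $f(0)=0$, whose solution $f(s)=-2(1-s)^2\ln(1-s)$ attains $1/e$ where $f'$ vanishes --- the point past which the game has already collapsed in a cascade --- which is the source of the value $(1+o(1))n^2/e$; the base of the induction (small $w$, or $h\le 0$) is trivial. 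Two reductions drive the crux. \emph{(i)} Max gains nothing by playing on the leader: such a brush merely hastens column~$1$'s firing, shortening the present phase by two turns while lowering $t'$ by two, and because Min plays no out-brushes every brush Max places on a non-leader column becomes an in-brush one-for-one until the very end of the game, so lowering $t'$ by two lengthens the remainder by at most roughly two and the effects cancel. This one-for-one accounting is sharper than the $2\sum(f-g)$ bound of Lemma~\ref{lem:domination}, and is precisely what the head-start parameter $t$ of the family $\mathcal{BG}(w,h,t)$ was introduced to track --- it lets us encode the configuration discrepancy created by any deviation of Max as a change in a single scalar and then compare $\bg^*$ (and $g$) values monotonically in it. \emph{(ii)} Among plays confined to non-leader columns, spreading is optimal: concentrating brushes on one column pushes it past its eventual in-brush capacity prematurely, which both forfeits out-brushes Max could have distributed among the other columns and causes that column to fire with no help from Min once it becomes the leader, so the total cannot increase.

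I expect reduction \emph{(i)} to be the main obstacle: making the ``one-for-one until the end'' accounting rigorous --- that is, bounding $\bg^*(w-1,h-2,t'-2)-\bg^*(w-1,h-2,t')$ by roughly $2$ (not the $4$ that Lemma~\ref{lem:domination} would give), uniformly enough that the accumulated per-phase error stays $o(n^2)$ over the $\Theta(n)$ phases --- is delicate precisely because near the terminal cascade the surplus head-start brushes flip from in-brushes to out-brushes. The remaining work is routine discretization bookkeeping: floors and ceilings in $t/w$, the parity of phase lengths, the terminal cascade, and the $O(n)$ error incurred in passing from the phase recursion to the differential equation, all of which sum to $o(n^2)$.
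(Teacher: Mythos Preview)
Your overall framework matches the paper's: both argue by induction on the width $w$ in the family $\mathcal{BG}(w,h,t)$, and both reduce the inductive step to analyzing the phase in which column~$1$ fires.  Where you diverge is in the \emph{form} of the induction.  You propose to bound $\bg^*(w,h,t)$ above by an explicit function $g(w,h,t)$ tracking the differential equation, and then worry (rightly) about whether per-phase errors stay $o(n^2)$.  The paper instead proves directly that the ideal game (balanced Max versus greedy Min) lasts \emph{at least as long} as the real game (arbitrary Max versus greedy Min); once that is established, Lemma~\ref{lem:complete_DE} gives the value with no further error-accounting.

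The payoff of the paper's formulation is that it sidesteps exactly the obstacle you flag in reduction~(i).  Rather than comparing the two games at the moment each one's column~$1$ fires (which gives different phase lengths and forces you to bound $\bg^*(w-1,h-2,t'-2)-\bg^*(w-1,h-2,t')$), the paper compares them at the \emph{same round} $r$, namely the round in which column~$1$ fires in the ideal game.  Since the balanced strategy minimizes the number of brushes on column~$1$, column~$1$ has already fired in the real game by round $r$ as well; and since the same total number of brushes $t+2r$ has been placed in both games, the number $t'=t+2r-h$ of brushes sitting on columns $2,\dots,w$ is identical in the two games.  In the ideal game those $t'$ brushes were placed according to the balanced strategy, which by the induction hypothesis is optimal for $\mathcal{BG}(w-1,h-2,t')$; in the real game they were placed arbitrarily, so the remaining length is at most $\bg^*(w-1,h-2,t')-t'$ in the real game and exactly that in the ideal game.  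No ``one-for-one'' accounting or near-cascade analysis is needed.

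There is one genuine complication your sketch does not anticipate: after round $r$ it may be that columns $1,\dots,k$ have all fired in the real game for some $k\ge 2$ (while only column~$1$ has fired in the ideal game).  The paper handles this by passing to the sub-game on columns $k+1,\dots,w$ and comparing the resulting heights $h_r=h-2k-y_r$ and $h_i=h-2-y_i$, showing $h_i>h_r$ via a short counting argument; this is where the hypothesis $w>h$ is used.  Your plan would eventually hit the same case, and the paper's treatment of it is worth studying.
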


\begin{proof}
It suffices to show that when Min uses the greedy strategy, Max's optimal response is to play the balanced strategy; consequently, by Lemma~\ref{lem:complete_DE}, Min can enforce the claimed upper bound by using the greedy strategy.  In fact, we prove a stronger claim: for any natural numbers $w, h$, and $t$ with $w > h$, the balanced strategy is optimal for Max in $\mathcal{BG}(w,h,t)$, given that Min uses the greedy strategy.  The original claim then follows by taking $w = n$, $h = n-1$, and $t = 0$.  We prove this stronger claim through induction on $w$.  When $w \le 2$, the claim is clear by inspection.

Consider $\mathcal{BG}(w,h,t)$.  As before, we compare two instances of the game.  In the {\em real game}, Max uses any fixed strategy, while in the {\em ideal game}, he uses the balanced strategy.  In both games, Min uses the greedy strategy.  We claim that the ideal game finishes no sooner than the real game, which would establish optimality of Max's balanced strategy.  

If Max is forced to fill the critical triangle within his initial $t$ turns, then the balanced strategy clearly maximizes the length of the game, so we may suppose the game continues past Max's initial turns.  Suppose column 1 fires in round $r$ of the ideal game, and consider the state of both games after $r$ rounds.  At this point, column 1 must have fired in the real game as well, since the balanced strategy minimizes the number of cells filled in column 1 by Max---in fact, when Min uses the greedy strategy, Max fills no cells in that column after his initial $t$ turns.  Let $t' = t+2r-h$; in both games, exactly $t'$ cells have been filled in columns $2, 3, \ldots, w$.  All $t'$ of these cells must have been filled by Max in accordance with the balanced strategy, since Min's greedy strategy ensures that she played her first $r$ moves in column 1.  If the real game has finished after round $r$, then the claim holds.  If instead the real game has not yet finished, then the ideal game also cannot have finished and, moreover, column 2 cannot have fired in that game. 

Suppose first that column 2 has not yet fired in the real game.  Since $t'$ cells have been filled in columns 2 and higher, we may view the remainder of the real game as an instance of $\mathcal{BG}(w-1,h-2,t')$ in which Min uses the greedy strategy.  In the real game, some of the cells in columns 2 and higher were filled by Min; in this new instance, we pretend that these cells were all filled by Max during his initial $t'$ turns.  The length of this game, and hence the number of turns remaining in the real game, is at most $\bg^*(w-1,h-2,t')-t'$.  We may likewise view the remainder of the ideal game as an instance of $\mathcal{BG}(w-1,h-2,t')$ in which Min again uses the greedy strategy.  However, this time, all cells already filled actually were filled by Max and, moreover, they were filled in accordance with the balanced strategy.  By the induction hypothesis, the number of turns remaining in the ideal game is exactly $\bg^*(w-1,h-2,t')-t'$, so the ideal game lasts at least as long as the real game, as claimed.

Now suppose instead that after $r$ rounds, exactly $k$ columns have fired in the real game for some $k \ge 2$.  If $k = w$, then the real game is complete and the claim follows, so suppose otherwise.  This time, we cannot necessarily view the remainder of the real game as an instance of $\mathcal{BG}(w-1,h-2,t')$, since it might be that more than $h-2$ cells were filled in column $2$ (which would be impossible in such a game).  Instead, we consider only columns $k+1, k+2, \ldots, w$.  Letting $x$ denote the total number of moves played in columns $2, 3, \ldots, k$, the number played in columns $k+1, k+2, \ldots, w$ is $t'-x$.  Since column $k+1$ has not yet fired, it has fewer than $h-2k$ filled cells.  Hence we may view the remainder of the real game as an instance of $\mathcal{BG}(w-k,h-2k,t'-x)$ in which Min uses the greedy strategy.  

Let $T_r$ and $T_i$ denote the number of turns remaining in the real game and the ideal game, respectively.  We aim to show that $T_r \le T_i$, from which the claim would follow.  We begin by upper-bounding $T_r$.  Just as before, we may pretend that Max played all $t'-x$ moves in this new instance.  By the induction hypothesis, we may suppose (since we seek an upper bound on $T_r$) that these moves were all played in accordance with the balanced strategy.  This means that Max has filled $y_r$ rows and $z_r$ cells in the next row, where $y_r = \floor{\frac{t'-x}{w-k}}$ let $z_r = t'-x - y_r(w-k)$.  Thus $T_r \le \bg^*(w-k,h_r,z_r)-z_r$, where $h_r = h-2k-y_r$.

Let us now analyze $T_i$.  In the ideal game, exactly $t'$ moves have been played in columns $2, 3, \ldots, w$, all played by Max in accordance with the balanced strategy.  Let $y_i = \floor{\frac{t'}{w-1}}$ and let $z_i = t'-y_i(w-1)$.  Viewing the remainder of the ideal game as a sub-game on a playfield of width $w-1$ and height $h-2-y_i$, we see that $T_i = \bg^*(w-1,h_i,z_i)-z_i$, where $h_i = h-2-y_i$.  

For the sake of comparing the two games, we claim that $y_i \le y_r + \frac{x-y_r(k-1)}{w-1} + 1$.  There are $x-y_r(k-1)$ filled cells in rows $y_r+1$ and higher of columns $2, 3, \ldots, k$ in the real game; if these moves had instead been played according to the balanced strategy, then they would have completed an additional $\floor{\frac{x-y_r(k-1)}{w-1}}$ full rows, and might also have completed a row that was already partially full.  Thus
$$y_i \le y_r + \frac{x-y_r(k-1)}{w-1} + 1 \le y_r + \frac{(h-1-y_r)(k-1)}{w-1} + 1 \le y_r + \frac{(h-1)(k-1)}{w-1}+1 < y_r + k,$$
where the second inequality uses the observation that no more than $h-1$ moves can be played in column 2 of the real game (hence $x \le (h-1)(k-1)$) and the final inequality uses the assumption that $h < w$.  It now follows that
$$h_i = h-2-y_i > h-2-(y_r+k) = h-(2+k)-y_r \ge h-2k-y_r = h_r.$$

As argued above, $T_i = \bg^*(w-1,h_i,z_i)-z_i$ and $T_r \le \bg^*(w-k,h_r,z_r)-z_r$.  Recall that we aim to show that $T_i \ge T_i$.  Toward this end, first note that adding $k-1$ additional columns to the latter game cannot decrease the length: Min uses the greedy strategy and hence never plays in the new columns, Max can choose to ignore them if he desires.  Invoking optimality of the balanced strategy, we obtain $\bg^*(w-k,h_r,z_r)-z_r \le \bg^*(w-1,h_r,z_r)-z_r$.  Next, we adjust the height of this game.  By the induction hypothesis, the greedy strategy is optimal for Max in $\bg(w-1,h_i,t^*)$ for all $t^*$, hence
$$\bg^*(w-1,h_r,z_r)-z_r = \bg^*(w-1,h_i,z_r+(w-1)(h_i-h_r)) - (z_r+(w-1)(h_i-h_r)),$$
since Max's first $(w-1)(h_i-h_r)$ moves in the latter game fill the first $h_i-h_r$ rows, which yields a configuration equivalent to the former game.  Letting $z^*_r = z_r+(w-1)(h_i-h_r)$ now yields
$$T_r \le \bg^*(w-k,h_r,z_r)-z_r \le \bg^*(w-1,h_r,z_r)-z_r = \bg^*(w-1,h_i,z^*_r)-z^*_r.$$
Finally, it is easily seen that $\bg^*(w-1,h_i,z_i)-z_i \le \bg^*(w-1,h_i,z^*_r)-z^*_r$: the left side of the equation counts the number of turns remaining in the game after an initial $z_i$ greedy moves by Max, while the right side counts the number of turns remaining after $z^*_r$ greedy moves, and the presence of the additional $z_r^*-z_i$ chips cannot lengthen the remainder of the game.  Hence $T_i \ge T_r$, which completes the proof.
\end{proof}

Theorem~\ref{thm:complete} now follows from Lemmas~\ref{lem:thm_complete_lower} and~\ref{lem:thm_complete_upper}.

\section{Random Graphs}\label{sec:random}

We next seek to establish a connection between the brushing game on the complete graph and the game on the random graph.  As an intermediate step, we introduce a fractional variant of the brushing game.  

The fractional brushing game behaves very similarly to the ordinary brushing game, but with two important changes.  First, vertices can contain non-integral numbers of brushes.  Second, a vertex $v$ fires when the number of brushes on $v$ is at least $p$ times the number of dirty neighbors, at which time it sends $p$ brushes to each dirty neighbor (where $p$ is a fixed constant between 0 and 1).

\begin{definition}
Given $p \in (0,1)$, the {\em fractional brushing game} on a graph $G$ has players {\em Max} and {\em Min}.  Throughout the game, each vertex of $G$ contains some nonnegative real number of {\em brushes}.  Initially, all vertices of $G$ are deemed {\em dirty}, and no vertex contains any brushes.  

The players alternate turns.  At the beginning of each turn, if the number of brushes on some vertex $v$ is at least $p$ times the number of dirty neighbors, then $v$ {\em fires}: $p$ brushes are added to each dirty neighbor of $v$, and $v$ itself is marked {\em clean}.  (The process of firing $v$ is also referred to as {\em cleaning} $v$.)  Vertices continue to fire, sequentially, until no more vertices may fire.  If at this point all vertices of $G$ are clean, then the game ends.  Otherwise, the player whose turn it is adds one brush to any dirty vertex, and the turn ends.

Min aims to minimize, and Max aims to maximize, the number of turns taken before the game ends.  When both players play optimally, the total number of turns taken is the {\em fractional game brush number} of $G$ with parameter $p$, denoted $\bg_p(G)$ if Min takes the first turn and by $\widehat \bg_p(G)$ when Max does.  When Min (resp.\ Max) starts the game, we sometimes refer to the process as the {\em Min-start} (resp.\ {\em Max-start}) game.  

In the Min-start (resp.\ Max-start) game, a {\em round} of the game consists of one turn by Min (resp.\ Max) and the subsequent turn by Max (resp.\ Min).
\end{definition}

We next show that the ordinary and fractional game brush numbers are closely connected.

\begin{theorem}\label{thm:fractional}
For every graph $G$ and for every positive $p = p(n)$, 
$$
\bg_p(G) = (1+o(1))p\bg(G) + O(n \ln n).
$$
\end{theorem}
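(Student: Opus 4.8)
The plan is to prove the two-sided bound by coupling a play of the ordinary brushing game on $G$ with a play of the fractional brushing game on $G$, run in parallel, and showing that the two games stay ``close'' throughout. The natural scaling is one fractional brush $\leftrightarrow$ $1/p$ ordinary brushes, so we should expect: an ordinary game of length $L$ corresponds roughly to a fractional game of length $pL$, and the error term $O(n\ln n)$ will be the accumulated discrepancy, one piece per vertex, with each piece bounded by the chip-stacking analysis of Lemma~\ref{lem:chip_stacking}.

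Concretely, I would establish two inequalities, $\bg_p(G) \le (1+o(1))p\,\bg(G) + O(n\ln n)$ and $\bg(G) \le (1+o(1))\tfrac1p\,\bg_p(G) + O(n\ln n)$, by the usual ``real game / imagined game'' technique used throughout the paper (as in Lemma~\ref{lem:domination} and Lemma~\ref{lem:thm_complete_lower}). For the first inequality, let Min play the fractional game as her ``real'' game and imagine an ordinary game; she copies an optimal ordinary strategy, scaled by $p$. The key quantitative fact is that in the fractional game a vertex $v$ fires once it holds $p\cdot(\#\text{dirty neighbors})$ brushes, whereas in the ordinary game it needs $\#\text{dirty neighbors}$ brushes; since each firing of a neighbor contributes $p$ (resp.\ $1$) brushes to $v$, the two firing conditions are ``the same up to factor $p$.'' The only place the correspondence can slip is rounding/integrality: a player must place exactly one (ordinary) brush per turn, and the fractional threshold $p\cdot d$ need not be a multiple of the per-turn increment. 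I would track, for each vertex $v$, the quantity $\delta(v) = (\text{fractional brushes on } v) - p\cdot(\text{scaled count of ordinary brushes Min wishes were on } v)$, and argue $\delta(v)$ behaves like a pile in a chip-stacking game (player~1 $\sim$ Max, player~2 $\sim$ Min), so by Lemma~\ref{lem:chip_stacking} it stays $O(k\log n)$ for the appropriate $k$; summing over the $n$ vertices and rescaling gives the $O(n\ln n)$ slack. The reverse inequality is symmetric, with the roles of the ``real'' and ``imagined'' games swapped and Max doing the imagining.

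The main obstacle is making the chip-stacking reduction precise: identifying exactly what the ``piles'' are, what ``adding $k$ chips'' and ``removing $\le k$ chips'' correspond to in terms of the players' moves, and checking that the firing dynamics (which can cascade, cleaning many vertices at once and changing the ``dirty degree'' of others) do not break the invariant. The cascading is actually benign here for the same reason it was in Lemma~\ref{lem:domination}: firing a neighbor of $v$ changes both the brushes on $v$ and the threshold of $v$ in a matched way in each game, so it does not disturb the discrepancy $\delta(v)$; one must simply verify that whenever $v$ becomes clean in one game it is also clean (or about to be) in the other, up to the chip-stacking slack. I would also need to confirm that ``$+O(n\ln n)$'' genuinely absorbs the per-vertex error: Lemma~\ref{lem:chip_stacking} gives a bound of $2k\lceil\log_{4/3} n + 1\rceil = O(k\log n)$ per pile, and with $k = O(1)$ (the per-turn increment after rescaling) this is $O(\log n)$ per vertex, hence $O(n\log n) = O(n\ln n)$ in total, independent of $p$ — which is exactly the shape of the claimed error term.

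Finally I would note that the $(1+o(1))$ factor, as opposed to an exact multiplicative $p$, is forced only by the discretization of turns in the two games running at different ``speeds'' (the fractional game advances by $1$ per turn, the scaled ordinary game by $p$ per turn, so after matching phases one game may be up to one turn ahead); this contributes a lower-order multiplicative error that is swallowed by $(1+o(1))$ whenever $\bg_p(G)$ and $p\,\bg(G)$ are superlinear, and is otherwise absorbed into the $O(n\ln n)$ term. With Theorem~\ref{thm:fractional} in hand, Theorem~\ref{thm:random_brushing} should follow by applying it to $K_n$ and to $G \in \G(n,p)$ and using that, when $pn \gg \ln n$, a.a.s.\ $G$ is close enough to $K_n$ (in degree and structure) that the fractional games on the two graphs have comparable lengths — but that is the content of Section~\ref{sec:random}, not of this theorem.
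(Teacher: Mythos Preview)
Your plan follows the same route as the paper's proof---couple the ordinary and fractional games, track a per-vertex discrepancy, and bound it via Lemma~\ref{lem:chip_stacking}---but there is a genuine gap in the synchronization step. You observe correctly that firing a \emph{neighbor} of $v$ does not disturb $\delta(v)$, but the real danger is that $v$ itself may fire in one game and not the other. From that moment the two games evolve on different dirty subgraphs: moves legal in one game may be illegal in the other, and the chip-stacking correspondence (which presumes the same set of ``piles'' is in play on both sides) breaks down. The paper handles this by introducing an \emph{Oracle} who, after every round, forcibly cleans---by adding extra brushes---any vertex that is clean in one game but dirty in the other. The entire purpose of the discrepancy bound is then to control the Oracle's total expenditure: at most $\tfrac{1}{p}\max\{-d^A(v)-d^B(v),0\}$ extra brushes per vertex in the ordinary game and $\max\{d^A(v)+d^B(v),0\}$ in the fractional game, summing to $O(n\ln n/p)$ and $O(n\ln n)$ respectively; Lemma~\ref{lem:domination} converts these into bounds on how much the Oracle perturbs the game lengths. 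Your phrase ``clean (or about to be) in the other'' does not substitute for this mechanism.

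A smaller point: the chip-stacking parameter is $k=1/p$, not $k=O(1)$. In one fractional round, player~A makes $1/p$ moves in the ordinary game (player~1 distributes $k=1/p$ chips) and one move in the fractional game (player~2 removes up to $k$ chips, since one fractional brush corresponds to $1/p$ ordinary brushes under the scaling); the pile at $v$ holds $k\cdot\max\{0,d^A(v)\}$ chips, Lemma~\ref{lem:chip_stacking} bounds this by $O(k\log n)$, and dividing by $k$ recovers $d^A(v)=O(\log n)$. Getting $k$ right matters for the case where $1/p$ is not an integer, which the paper treats separately at the end.
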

\begin{proof}
Initially, we assume that $1/p$ is always integral.  After proving the theorem under this additional assumption, we briefly explain the modifications needed to extend the argument to all values of $p$.

We bound $\bg_p(G)$ both above and below in terms of $\bg(G)$.  For the lower bound we give a strategy for Max, and for the upper bound we give a strategy for Min.  Since these strategies are quite similar, we present them simultaneously.  Denote the players by ``A'' and ``B'' (where A and B could represent either one of Min or Max).  Player A imagines an instance of the ordinary game on $G$ and uses an optimal strategy in that game to guide his or her play in the fractional game.  The games proceed simultaneously, except that for every round played in the fractional game, player A simulates $1/p$ rounds in the ordinary game.  To simplify the analysis, we introduce a third entity, the {\em Oracle}.  At various points, the Oracle will ``pause'' the games and add extra brushes to various vertices, in an attempt to ``synchronize'' the games by ensuring that all clean vertices in the fractional game are also clean in the ordinary game and vice-versa. Note that each brush introduced by the Oracle can decrease the length of the corresponding game by at most 2.  (This follows from Lemma~\ref{lem:domination}; while the lemma only directly applies to the ordinary game, the same argument suffices for the fractional game with only cosmetic changes.)  Moreover, note that no dirty vertex ever contains brushes added by the Oracle.

Before presenting the strategies, we introduce some additional notation.  For a vertex $v$, let $x^A(v)$ (resp. $y^A(v)$) denote the number of brushes played on $v$ by A in the ordinary game (resp. fractional game); define $x^B(v)$ and $y^B(v)$ similarly.  The {\em discrepancy for A} at a vertex $v$ is defined by $d^A(v) = px^A(v) - y^A(v)$, and the {\em discrepancy for B} is defined by $d^B(v) = px^B(v) - y^B(v)$.  

Player A plays as follows.  By Theorem~\ref{thm:at_most_one}, we may assume without affecting the asymptotics that B plays first (in both games), so each round of the fractional game consists of a move by B and the subsequent move by A.  After B plays in the fractional game, A simulates 
$1/p$ moves by B in the ordinary game; for each move, the imagined B plays on some dirty vertex $v$ minimizing $d^B(v)$.  Player A responds to each imagined move according to some optimal strategy for the ordinary game.  
Now A plays, in the fractional game, on some dirty vertex $v$ maximizing $d^A(v)$.  Finally, if any vertices are clean in the fractional game but not in the ordinary game, then the Oracle adds brushes to these vertices in the ordinary game in order to clean them.  (Note that to clean these vertices, it suffices to add at most $\frac{1}{p}\max\{-d^A(v) - d^B(v),0\}$ brushes to every such vertex $v$.)  Likewise, if any vertices are clean in the ordinary game but not in the fractional game, then the Oracle adds brushes in the fractional game to clean them.  (This time, the Oracle must add at most $\max\{d^A(v)+d^B(v),0\}$ brushes to each such vertex $v$.)  The Oracle repeats these steps until every vertex that is clean in one game is also clean in the other.

Each brush added by the Oracle changes the length of the corresponding game by at most a constant number of turns.  We want to show that the Oracle need not add many brushes.  To this end, we claim that for every vertex $v$, at all points during the game, $d^A(v) = O(\ln n)$.  Let $D_1 = \sum_w \max\{d^A(w), 0\}$, where the summation is over all vertices $w$ that are dirty in the ordinary game.  In any given round, A's plays in the ordinary game may increase $D_1$ by as much as 1.  If at this point $D_1$ exceeds $n$, then $d^A(v) \ge 1$ for some vertex $v$, so A's subsequent move in the fractional game will reduce $D_1$ by $1$.  Consequently, we have $D_1 < n$ at the beginning of each round, so $D_1 < n+1$ at all points during the game.  It follows that, at all times, the number of vertices $v$ for which $d^A(v) \ge 2.1$ must be less than $\frac{n+1}{2.1}$, which in turn is less than $n/2$ for sufficiently large $n$.  

Now let $D_2 = \sum_w \left (\max\{d^A(w), 2.1\}-2.1\right )$, where again the sum is over all vertices $w$ that are dirty in the ordinary game.  As above, we must have $D_2 < m+1$ at all times, where $m$ denotes the number of vertices with discrepancy at least $2.1$.  The argument above shows that $m < \frac{n}{2}$, so always $D_2 < \frac{n+2}{2}$.  Thus, the number of vertices with discrepancy at least $4.2$ -- that is, the number of vertices contributing at least $2.1$ to $D_2$ -- is always at most $\frac{n+2}{2 \cdot 2.1}$, which is less than $n/4$ for sufficiently large $n$.  Iterating this argument shows that for all positive integers $k$, at any point in the game, at most $n/2^k$ vertices have discrepancy at least $2.1k$.  In particular, no vertex can ever have discrepancy at least $2.1 (\ceil{\log_2 n}+1)$, hence $d^A(v) = O(\ln n)$ for every vertex $v$ at all times.  A similar argument shows that $-d^B(v) = O(\ln n)$ for all $v$.

We now bound the length of the fractional game.  Let $\ell_o$ and $\ell_f$ denote the lengths of the ordinary and fractional games, respectively, and let $m_o$ and $m_f$ denote the numbers of brushes added by the Oracle to the ordinary and fractional games, respectively.  As argued above, when the Oracle adds brushes to a vertex $v$ in the ordinary game, it adds at most $\frac{1}{p}\max\{-d^A(v)-d^B(v),0\}$.  Furthermore, once the Oracle adds brushes to a vertex $v$, that vertex becomes clean in both games, so $d^A(v)$ and $d^B(v)$ cease to change; hence it suffices to analyze the discrepancies at the end of the game.  We now have
\begin{align*}
m_o &\le \sum_{v \in V(G)} \frac{1}{p}\max\{-d^A(v) - d^B(v),0\}\\
    &\le \frac{1}{p}\left (\sum_{v \in V(G)} \size{d^A(v)} + \sum_{v \in V(G)} \size{d^B(v)}\right )\\ 
		&\le \frac{1}{p}\left (2\sum_{v: d^A(v) > 0} d^A(v) + 2\sum_{v: d^B(v) < 0} -d^B(v)\right )\\
		&= O(n \ln n / p),
\end{align*}
where the third inequality uses the fact that $\sum_{v \in V(G)} d^A(v) = \sum_{v \in V(G)} d^B(v) = 0$.  By Lemma~\ref{lem:domination}, the Oracle's intervention changes the length of the ordinary game by only $O(n \ln n / p)$ turns.  Likewise, $m_o = O(n \ln n)$, so the Oracle's interference changes the length of the fractional game by only $O(n \ln n)$ turns.

To conclude the proof, suppose A is actually Min.  Since Min follows an optimal strategy for the ordinary game (putting aside the Oracle's interference), we have $\ell_o \le \bg(G) + O(n \ln n / p)$.  Likewise, since Max follows an optimal strategy for the fractional game, we have $\ell_f \ge \bg_p(G) - O(n \ln n)$.  The Oracle ensures that the two games finish within one round of each other, hence
$$\bg_p(G) - O(n \ln n) \le \ell_f = p\ell_o + O(1) \le p(\bg(G) + O(n \ln n / p)) + O(1) = p\bg(G) + O(n \ln n),$$
hence $\bg_p(G) \le p\bg(G) + O(n \ln n)$, which establishes the claimed upper bound on $\bg_p(G)$.

Finally, suppose A is Max.  This time, Max follows an optimal strategy for the ordinary game, so $\ell_o \ge \bg(G) - O(n \ln n / p)$.  Similarly, $\ell_f \le \bg_p(G) + O(n \ln n)$.  Combining these observations, we have
$$\bg_p(G) - O(n \ln n) \ge \ell_f = p\ell_o + O(1) \ge p(\bg(G) + O(n \ln n / p)) + O(1) = p\bg(G) + O(n \ln n),$$
hence $\bg_p(G) \ge p\bg(G) + O(n \ln n)$.  This establishes the lower bound, and completes the proof under the assumption that $1/p$ is integral.

%
Suppose now that $1/p$ is not integral.  In the argument above, after every round in the fractional game, player A simulated $1/p$ rounds in the ordinary game; when $1/p$ is not an integer, he or she cannot do this.  Instead, after round $i$ in the fractional game, A simulates $\ceil{i/p} - \ceil{(i-1)/p}$ rounds in the ordinary game.  The remainder of the argument proceeds just as before.
\end{proof}

Theorems~\ref{thm:complete} and \ref{thm:fractional} yield the following important corollary.

\begin{corollary}\label{cor:fractional_complete}
If $p \gg \ln n/n$, then $\bg_p(K_n) = (1+o(1))p\bg(K_n) = (1+o(1))pn^2/e$.
\end{corollary}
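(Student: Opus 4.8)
The plan is to obtain the corollary as an immediate consequence of the two theorems already established for complete graphs and for the fractional game, the only genuine content being a check that the additive error term is of lower order under the hypothesis $p \gg \ln n / n$.

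First I would apply Theorem~\ref{thm:fractional} with $G = K_n$. This gives
$$\bg_p(K_n) = (1+o(1))\,p\,\bg(K_n) + O(n \ln n).$$
Next I would substitute the asymptotic value of $\bg(K_n)$ supplied by Theorem~\ref{thm:complete}, namely $\bg(K_n) = (1+o(1))n^2/e$; combining this with the displayed identity yields $p\,\bg(K_n) = (1+o(1))\,pn^2/e$, and so it remains only to absorb the $O(n\ln n)$ term.

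To do this I would use the hypothesis $p \gg \ln n / n$, which gives $pn^2 \gg n \ln n$, equivalently $n\ln n = o(pn^2) = o\big(p\,\bg(K_n)\big)$. Hence the $O(n\ln n)$ term is $o\big(p\,\bg(K_n)\big)$, and
$$\bg_p(K_n) = (1+o(1))\,p\,\bg(K_n) = (1+o(1))\,\frac{pn^2}{e},$$
as claimed. There is no real obstacle here; the one point I would make explicit is that the $o(1)$ appearing in Theorem~\ref{thm:fractional} is a function of $n$ only and does not depend on $p$, so the error estimates above hold uniformly across the permitted range of $p$, and the final $o(1)$ is legitimate.
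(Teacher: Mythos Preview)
Your argument is correct and matches the paper's approach: the corollary is presented there as an immediate special case of Theorem~\ref{thm:fractional} combined with Theorem~\ref{thm:complete}, with no separate proof given. Your explicit verification that the hypothesis $p \gg \ln n/n$ forces $O(n\ln n) = o(p\,\bg(K_n))$ is exactly the point that makes the corollary follow.
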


The final tool we need is the following well-known result known as the Chernoff Bound:

\begin{theorem}[\cite{JLR}]\label{thm:Chernoff}
Let $X$ be a random variable that can be expressed as a sum $X=\sum_{i=1}^n X_i$ of independent random indicator variables $X_i$, where $X_i$ is a Bernoulli random variable with $\prob{X_i = 1} = p_i$ (the $p_i$ need not be equal).  For $t \ge 0$,
\begin{eqnarray*}
\prob {X \ge \expect{X} + t} &\le& \exp \left( - \frac {t^2}{2(\expect{X}+t/3)} \right) \quad \text{ and}\\
\prob {X \le \expect{X} - t} &\le& \exp \left( - \frac {t^2}{2\expect{X}} \right).
\end{eqnarray*}
In particular, if $\eps \le 3/2$, then
\begin{eqnarray*}
\prob {|X - \expect{X}| \ge \eps \expect{X}} &\le& 2 \exp \left( - \frac {\eps^2 \expect{X}}{3} \right).
\end{eqnarray*}
\end{theorem}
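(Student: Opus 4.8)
The plan is to prove both tail bounds by the standard exponential-moment (Chernoff) method and then reduce the ``in particular'' statement to them. Write $\mu = \expect{X} = \sum_{i=1}^n p_i$. For the upper tail, fix any $\lambda > 0$ and apply Markov's inequality to the nonnegative random variable $e^{\lambda X}$, giving $\prob{X \ge \mu + t} \le e^{-\lambda(\mu+t)}\expect{e^{\lambda X}}$. By independence the moment generating function factors as $\expect{e^{\lambda X}} = \prod_{i=1}^n \expect{e^{\lambda X_i}} = \prod_{i=1}^n\bigl(1 + p_i(e^{\lambda}-1)\bigr)$, and the elementary inequality $1 + u \le e^u$ then yields $\expect{e^{\lambda X}} \le \exp\bigl((e^{\lambda}-1)\mu\bigr)$. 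Hence $\prob{X \ge \mu + t} \le \exp\bigl((e^{\lambda}-1)\mu - \lambda(\mu+t)\bigr)$ for every $\lambda > 0$.

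Next I would optimize the exponent over $\lambda$. Writing $\delta = t/\mu$, the choice $\lambda = \ln(1+\delta)$ minimizes the exponent and produces the classical multiplicative bound $\prob{X \ge \mu+t} \le \bigl(e^{\delta}/(1+\delta)^{1+\delta}\bigr)^{\mu}$. The lower tail is handled symmetrically: applying Markov to $e^{-\lambda X}$ for $\lambda > 0$, factoring as above, and optimizing at $e^{-\lambda} = 1-\delta$ gives $\prob{X \le \mu - t} \le \bigl(e^{-\delta}/(1-\delta)^{1-\delta}\bigr)^{\mu}$.

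It remains to convert these multiplicative bounds into the additive forms stated. For the upper tail this amounts to the real-variable inequality $(1+\delta)\ln(1+\delta) - \delta \ge \delta^2 / (2 + 2\delta/3)$ for all $\delta \ge 0$, which (after multiplying through by $\mu$ and substituting $\delta = t/\mu$) gives exactly the bound $\exp(-t^2/(2(\mu + t/3)))$. For the lower tail the required inequality is the cleaner $(1-\delta)\ln(1-\delta) + \delta \ge \delta^2/2$ on $[0,1)$; this follows by noting that both sides vanish at $\delta = 0$ and comparing derivatives, using $-\ln(1-\delta) \ge \delta$. Finally, the ``in particular'' claim follows by setting $t = \eps\mu$: the upper bound becomes $\exp(-\eps^2\mu/(2(1+\eps/3)))$, and since $\eps \le 3/2$ forces $2(1+\eps/3) \le 3$, this is at most $\exp(-\eps^2\mu/3)$; the lower bound $\exp(-\eps^2\mu/2)$ is likewise at most $\exp(-\eps^2\mu/3)$, so a union bound over the two tails yields the factor-$2$ two-sided estimate.

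The main obstacle is the upper-tail analytic inequality $(1+\delta)\ln(1+\delta) - \delta \ge \delta^2/(2+2\delta/3)$: unlike the lower tail, a single derivative comparison does not settle it immediately, and one typically argues by examining the auxiliary function $\psi(\delta) = (1+\delta)\ln(1+\delta) - \delta - \delta^2/(2+2\delta/3)$, checking $\psi(0) = \psi'(0) = 0$ and verifying $\psi''(\delta) \ge 0$ for $\delta \ge 0$ (equivalently, via a power-series comparison of the two sides), to conclude $\psi \ge 0$ throughout. Everything else is routine; moreover, since the result is quoted verbatim from~\cite{JLR}, one could instead simply cite it.
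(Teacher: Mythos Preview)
The paper does not prove this theorem at all: it is stated with attribution to~\cite{JLR} and used as a black box. Your proposal, by contrast, supplies the standard exponential-moment argument in full, and what you outline is correct (including the identification of the upper-tail inequality $(1+\delta)\ln(1+\delta)-\delta \ge \delta^2/(2+2\delta/3)$ as the one step requiring real work). So there is no discrepancy of approach to discuss; you have simply proved something the paper was content to quote, as you yourself note in your final sentence.
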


We are now ready to prove the main result of this section.

\begin{proof}[Proof of Theorem~\ref{thm:random_brushing}.]
Corollary~\ref{cor:fractional_complete} establishes the second equality in the theorem statement, so it suffices to prove the first. Let $G$ be any graph on $n$ vertices. To obtain an upper bound for $\bg(G)$, we provide a strategy for Min that mimics her optimal strategy for the fractional game on $K_n$, which yields an upper bound on $\bg(G)$ in terms of $\bg_p(K_n)$. Max plays his moves in the \emph{real} game on $G$, but Min interprets them as moves in the \emph{imaginary} fractional game on $K_n$. Min plays according to an optimal strategy in the imaginary game, then makes the same move in the real game. Our hope is that the two games behave very similarly for $G \in \G(n,p)$.  

As in the proof of Theorem~\ref{thm:fractional}, we introduce an \emph{Oracle} to enforce synchronization between the games.  The Oracle can, at any time, clean a vertex in either game by adding extra brushes. Suppose some vertex fires in the real game but not in the imaginary one. When this happens, the Oracle cleans this vertex in the imaginary game; by Lemma~\ref{lem:domination}, this cannot increase the length of the imaginary game. Now suppose instead that some vertex $v$ fires in the imaginary game but not in the real one. 
In this case, the Oracle adds brushes to $v$ in the real game until it fires.  By Lemma~\ref{lem:domination}, each brush placed by the oracle can decrease the length of the real game by at most 2.

To obtain an upper bound on the length of the real game, we must bound the number of brushes added by the Oracle in the real game. Let $\pi = (v_1, v_2, \ldots, v_n)$ be the cleaning sequence produced during the game, that is, the order in which the vertices fire. Consider the state of the game when only vertices $v_1, v_2, \ldots, v_{i-1}$ are clean. In the imaginary game, $v_i$ received $(i-1)p$ brushes from earlier neighbors and needs a total of $(n-i)p$ to fire. In the real game, it received $\deg^-_{\pi}(v_i) = |N(v_i) \cap \{v_1, v_2, \ldots, v_{i-1} \}|$ brushes and needs $\deg(v_i) - \deg^-_{\pi}(v_i)$. If the Oracle adds brushes to clean $v_i$ in the real game, then it must be that, compared to the imaginary game, $v_i$ either received fewer brushes or needs more to fire (or both). The Oracle must add at most $D_{\pi}(v_i)$ brushes, where
\begin{eqnarray*}
D_\pi(v_i) &=& \max \left\{ \Big( (i-1)p - (n-i)p \Big)- \Big(\deg^-_{\pi}(v_i) - (\deg(v_i) - \deg^-_{\pi}(v_i)) \Big) , 0\right\} \\
&=& \max \left\{ \Big( 2 (i-1)p - (n-1)p \Big)- \Big(2 \deg^-_{\pi}(v_i) - \deg(v_i) \Big) , 0\right\} \\
&\le& 2 \max \left\{ (i-1)p - \deg^-_{\pi}(v_i), 0\right\} + \max \left\{ \deg(v_i) - (n-1)p, 0\right\}.
\end{eqnarray*}
Hence the length of the game is at most $\bg_p(K_n) + 2D_{\pi}(G)$, where
$$
D_{\pi}(G) = 2 \sum_{i = 1}^n \max \left\{ (i-1)p - \deg^-_{\pi}(v_i), 0\right\} + \sum_{i=1}^n \max \left\{ \deg(v_i) - (n-1)p, 0\right\}.
$$
Of course we do not know, in advance, the cleaning sequence $\pi$.  However, we still obtain the following upper bound:
$$
\bg(G) \le \bg_p(K_n) + 2\max_{\pi} D_{\pi} (G).
$$

A similar argument yields a lower bound on $\bg(G)$. We still provide a strategy for Min, but this time, she uses an optimal strategy for the real game on $G$ to guide her play in the imaginary (fractional) game on $K_n$. If a vertex fires in the imaginary game but not in the real one, then the Oracle cleans that vertex by providing extra brushes in the real game; this cannot increase the length of the real game. If a vertex $v$ fires in the real game but not in the imaginary one, then the Oracle adds cleans $v$ by adding brushes in the imaginary game; each brush added can decrease the length of the imaginary game by at most 2.  When the Oracle adds brushes in the imaginary game, it is because more brushes were received or fewer were needed (or both) in the real game than in the imaginary game. Using symmetry and the notation introduced above, we obtain $\bg_p(K_n) \le \bg(G) + 2D_{\bar{\pi}}(G)$, where $\bar{\pi}$ is the reverse of $\pi$. Consequently, we get
$$
\bg_p(K_n) \le \bg(G) + 2\max_{\pi} D_{\bar{\pi}}(G) = \bg(G) + 2\max_{\pi} D_{\pi}(G).
$$
It follows that
$$
| \bg(G) - \bg_p(K_n)| \le 2\max_{\pi} D_{\pi}(G).
$$

Now let $G \in \G(n,p)$, let $d=d(n)=p(n-1)$, and let $\omega=\omega(n)=d / \ln n$ (note that $\omega$ tends to infinity as $n \to \infty$). To complete the proof, it suffices to show that a.a.s.\ $D_{\pi}(G) = o(pn^2)$. We can easily bound the second summation in the definition of $D_{\pi}(G)$ using the Chernoff Bound.

Fix a vertex $v_i$. Clearly $\expect{\deg(v_i)} = p(n-1) = d$, so it follows immediately from the Chernoff Bound (Theorem~\ref{thm:Chernoff}) that
$$
\prob {|\deg(v_i) - d | \ge \eps d} \le 2 \exp \left( - \frac {\eps^2 d}{3} \right) = 2n^{-2},
$$
for $\eps = \sqrt{6 \ln n / d} = \sqrt{6/\omega}=o(1)$. Hence by the Union Bound, a.a.s.\ for all $i \in \{1, 2, \ldots, n\}$ we have $\deg(v_i)= (1+o(1)) d$. As a consequence, a.a.s.\
$$
\sum_{i=1}^n \max \left\{ \deg(v_i) - (n-1)p, 0\right\} = \sum_{i=1}^n \max \left\{ \deg(v_i) - d, 0\right\} \le n \cdot o(d) = o(pn^2).
$$

Estimating the first summation in the definition of $D_{\pi}(G)$ is slightly more complicated, since we must consider all possible cleaning sequences $\pi$. First, observe that the partial sum containing only the first $n/\omega^{1/5}$ terms can be bounded (deterministically, for any $\pi$) as follows:
$$
\sum_{i = 1}^{n/\omega^{1/5}} \max \left\{ (i-1)p - \deg^-_{\pi}(v_i), 0\right\} \le p \sum_{i = 1}^{n/\omega^{1/5}} (i-1) = O(p n^2 / \omega^{2/5}) = o(pn^2).
$$

Now fix some cleaning sequence $\pi$ and some $i$ exceeding $n/\omega^{1/5}$. Clearly, $\expect{\deg^-_{\pi}(v_i)} = p(i-1)$. Let $\eps = \eps(n)= \sqrt{3}/\omega^{1/5} = o(1)$. We call vertex $v_i$ \emph{bad} if $\expect{\deg^-_{\pi}(v_i)} - \deg^-_{\pi}(v_i) > \eps \expect{\deg^-_{\pi}(v_i)}$. Applying the Chernoff Bound again, the probability that $v_i$ is bad is at most
$$
\exp \left( - \frac {\eps^2 p(i-1)}{3} \right) \le \exp \left( -\omega^{-2/5} p (n \omega^{-1/5}) \right) \le \exp \left( -\omega^{2/5} \ln n \right) =: q.
$$
The important observation is that the events $\{ v_i \text{ is bad}\}$ are mutually independent, so the probability that there are at least $n/\omega^{1/5}$ bad vertices is at most
$$
2^n q^{n/\omega^{1/5}} = \exp \left( O(n) -\omega^{1/5} n \ln n \right) = \exp \left(- (1+o(1)) \omega^{1/5} n \ln n \right) = o(1/n^n) = o(1/n!).
$$
Hence, by the Union Bound, a.a.s.\ for all possible cleaning sequences $\pi$ there are at most $n/\omega^{1/5}$ bad vertices. It follows that a.a.s.\ the first summation in the definition of $D_{\pi}(G)$ can be bounded as follows:
\begin{eqnarray*}
\sum_{i = 1}^n \max \left\{ (i-1)p - \deg^-_{\pi}(v_i), 0\right\} &=& o(pn^2) + \sum_{i > n/\omega^{1/5}} \max \left\{ (i-1)p - \deg^-_{\pi}(v_i), 0\right\} \\
&\le& o(pn^2) + n (\eps d) + (n/\omega^{1/5}) (np) = o(pn^2).
\end{eqnarray*}
This completes the proof.
\end{proof}


\begin{thebibliography}{99}

\bibitem{APW} N.~Alon, P.~Pra{\l}at, and N.~Wormald, Cleaning regular graphs with brushes, \emph{SIAM Journal on Discrete Mathematics} \textbf{23} (2008), 233--250.

\bibitem{chip} A.~Bjorner, L.~Lovasz, and P.W.~Shor,  Chip-firing games on graphs, \emph{European Journal of Combinatorics}, \textbf{12} (1991), 283--291.

\bibitem{BKP} A.\ Bonato, W.B.\ Kinnersley, and P.\ Pra\l{}at, Toppling numbers of complete and random graphs, \emph{Discrete Mathematics and Theoretical Computer Science} \textbf{16:3} (2014), 229--252.

\bibitem{BDP} P.\ Borowiecki, D.\ Dereniowski, and P.\ Pra\l{}at, Brushing with additional cleaning restrictions, \emph{Theoretical Computer Science} \textbf{557} (2014), 76--86.

\bibitem{BKR} B.\ Br\v esar, S.\ Klav\v zar, and D.F.\ Rall, Domination game and an imagination strategy, \emph{SIAM Journal on Discrete Mathematics} \textbf{24} (2010), 979--991. 

\bibitem{BFGPP} D.\ Bryant, N.\ Francetic, P.\ Gordinowicz, D.\ Pike, and P.\ Pra\l{}at, Brushing without capacity restrictions, \emph{Discrete Applied Mathematics} \textbf{170} (2014) 33--45.

\bibitem{CKOW} D.W.\ Cranston, W.B.\ Kinnersley, S.\ O, and D.B.\ West, Game matching number of graphs, \emph{Discrete Applied Mathematics} \textbf{161} (2013), 1828--1836.

\bibitem{DZ} T.\ Dinski and X.\ Zhu, Game chromatic number of graphs, \emph{Discrete Mathematics} \textbf{196} (1999), 109--115.

\bibitem{GMNP2} S.~Gaspers, M.-E.~Messinger, R.J.~Nowakowski, and P.~Pra{\l}at, Clean the graph before you draw it!, \emph{Information Processing Letters} \textbf{109} (2009), 463--467.

\bibitem{GMNP} S.~Gaspers, M.-E.~Messinger, R.J.~Nowakowski, and P.~Pra{\l}at, Parallel cleaning of a network with brushes, \emph{Discrete Applied Mathematics} \textbf{158} (2009) 467--478.

\bibitem{GNP} P.~Gordinowicz, R.~Nowakowski, and P.~Pra{\l}at, {\sc polish}---Let us play the cleaning game, \emph{Theoretical Computer Science} \textbf{463} (2012), 123--132.

\bibitem{JLR} S.\ Janson, T.\ {\L}uczak, A.\ Ruci\'nski, \emph{Random Graphs}, Wiley, New York, 2000.

\bibitem{algorithm} D.~Lokshtanov, N.~Misra, and S.~Saurabh, Imbalance is Fixed Parameter Tractable, \emph{Proceedings of International Computing and Combinatorics Conference (COCOON 2010)}.

\bibitem{merino} C.\ Merino, The chip-firing game, \emph{Discrete Mathematics} \textbf{302} (2005) 188--210.

\bibitem{MNP} M.-E.~Messinger, R.J.~Nowakowski, and P.~Pra{\l}at, Cleaning a network with brushes, \emph{Theoretical Computer Science} \textbf{399} (2008), 191--205.

\bibitem{MNP2} M.-E.~Messinger, R.J.~Nowakowski, and P.~Pra\l{}at, Cleaning with Brooms, \emph{Graphs and Combinatorics} \textbf{27} (2011), 251--267.

\bibitem{P2} P.~Pra\l{}at, Cleaning random $d$--regular graphs with Brooms, \emph{Graphs and Combinatorics} \textbf{27} (2011), 567--584.

\bibitem{P} P.~Pra\l{}at, Cleaning random graphs with brushes, \emph{Australasian Journal of Combinatorics} \textbf{43} (2009), 237--251.

\bibitem{west} D.B.\ West, \emph{Introduction to Graph Theory, 2nd edition}, Prentice Hall, 2001.
\end{thebibliography}
\end{document}